\newcommand{\burl}[1]{\textcolor{blue}{\url{#1}}}
\newcommand{\nnend}{\nonumber\\}
\newcommand\be{\begin{equation}}
\newcommand\ee{\end{equation}}
\newcommand\bea{\begin{eqnarray}}
\newcommand\eea{\end{eqnarray}}
\newcommand\bi{\begin{itemize}}
\newcommand\ei{\end{itemize}}
\newcommand\ben{\begin{enumerate}}
\newcommand\een{\end{enumerate}}
\theoremstyle{plain}
\newtheorem{thm}{Theorem}[section]
\newtheorem{conj}[thm]{Conjecture}
\newtheorem{cor}[thm]{Corollary}
\newtheorem{lem}[thm]{Lemma}
\theoremstyle{definition}
\newtheorem{exa}[thm]{Example}
\newtheorem{defi}[thm]{Definition}
\newtheorem{nota}[thm]{Notation}
\theoremstyle{remark}
\newtheorem{rek}[thm]{Remark}
\numberwithin{equation}{section}
\renewcommand{\[}{\begin{equation}}
\renewcommand{\]}{\end{equation}}
\author[E.~Bo\l dyriew]{El\.zbieta~Bo\l dyriew}
\email{\textcolor{blue}{\href{mailto:eboldyriew@colgate.edu}{eboldyriew@colgate.edu}}}
\address{Department of Mathematics, Colgate University, Hamilton, NY 13346}
\author[J.~Haviland]{John~Haviland}
\email{\textcolor{blue}{\href{mailto:havijw@umich.edu}{havijw@umich.edu}}}
\address{Department of Mathematics, University of Michigan, Ann Arbor, MI 48109}
\author[P.~L\^am]{Ph\'uc~L\^am}
\email{\textcolor{blue}{\href{mailto:plam6@u.rochester.edu}{plam6@u.rochester.edu}}}
\address{Department of Mathematics, University of Rochester, Rochester, NY 14627}
\author[J.~Lentfer]{John~Lentfer}
\email{\textcolor{blue}{\href{mailto:jlentfer@hmc.edu}{jlentfer@hmc.edu}}}
\address{Department of Mathematics, Harvey Mudd College, Claremont, CA 91711}
\author[S.~J.~Miller]{Steven~J.~Miller}
\email{\textcolor{blue}{\href{mailto:sjm1@williams.edu}{sjm1@williams.edu}},  \textcolor{blue}{\href{Steven.Miller.MC.96@aya.yale.edu}{Steven.Miller.MC.96@aya.yale.edu}}}
\address{Department of Mathematics and Statistics, Williams College, Williamstown, MA 01267}
\author[F.~Trejos~Su\'arez]{Fernando~Trejos~Su\'arez}
\email{\textcolor{blue}{\href{mailto:fernando.trejos@yale.edu}{fernando.trejos@yale.edu}}}
\address{Department of Mathematics, Yale University, New Haven, CT 06520}
\title{An Introduction to Completeness of Positive Linear Recurrence Sequences}
\date{\today}
\thanks{This research was conducted as part of the SMALL 2020 REU at Williams College. The authors were supported by NSF Grants DMS1947438 and DMS1561945, Williams College, Yale University, and the University of Rochester.}
\keywords{Positive linear recurrence sequences, complete sequences, Brown's criterion, characteristic polynomial}
\begin{document}
	
\begin{abstract}
	 A positive linear recurrence sequence (PLRS) is a sequence defined by a homogeneous linear recurrence relation with positive coefficients and a particular set of initial conditions. A sequence of positive integers is \emph{complete} if every positive integer is a sum of distinct terms of the sequence. One consequence of Zeckendorf's theorem is that the sequence of Fibonacci numbers is complete. Previous work has established a generalized Zeckendorf's theorem for all PLRS's. We consider PLRS's and want to classify them as complete or not. We study how completeness is affected by modifying the recurrence coefficients of a PLRS. Then, we determine in many cases which sequences generated by coefficients of the forms $[1, \ldots, 1, 0, \ldots, 0, N]$ are complete. Further, we conjecture bounds for other maximal last coefficients in complete sequences in other families of PLRS's. Our primary method is applying Brown's criterion, which says that an increasing sequence $\{H_n\}_{n = 1}^{\infty}$ is complete if and only if $H_1 = 1$ and $H_{n + 1} \leq 1 + \sum_{i = 1}^n H_i$. This paper is an introduction to the topic that is explored further in \cite{BHLLMT}.
	
\end{abstract}

\maketitle

\tableofcontents

\section{Introduction}

Edouard Zeckendorf famously proved that every positive integer can be written uniquely as a sum of non-consecutive Fibonacci numbers, when indexed $\{1,2,3,5,\dots\}$; this unique decomposition is called the \emph{Zeckendorf decomposition} \cite{Ze}. The property of unique decompositions has been generalized to a much larger class of linear recurrence relations, called PLRS's. The following definitions are from \cite{MW, BBGILMT}.

\begin{defi}\label{defn:goodrecurrencereldef} We say a sequence $\{H_n\}_{n=1}^\infty$ of positive integers is a \textbf{Positive Linear Recurrence Sequence (PLRS)} if the following properties hold:

\ben
\item \emph{Recurrence relation:} There are non-negative integers $L, c_1, \dots, c_L$\label{c_i} such that \be H_{n+1} \ = \ c_1 H_n + \cdots + c_L H_{n+1-L},\ee  with $L, c_1$ and $c_L$ positive.
\item \emph{Initial conditions:} $H_1 = 1$, and for $1 \le n < L$ we have
\be H_{n+1} \ =\
c_1 H_n + c_2 H_{n-1} + \cdots + c_n H_{1}+1.\ee
\een
\end{defi}

\begin{defi}[Legal decompositions]
We call a decomposition $\sum_{i=1}^{m} {a_i H_{m+1-i}}$\label{a_i} of a positive integer $N$ (and the sequence $\{a_i\}_{i=1}^{m}$) legal \label{legal} if $a_1>0$, the other $a_i \ge 0$, and one of the following two conditions holds:
\ben

\item We have $m<L$ and $a_i=c_i$ for $1\le i\le m$.

\item There exists $s\in\{1,\dots, L\}$ such that
\begin{equation}
a_1\ = \ c_1,\ a_2\ = \ c_2,\ \cdots,\ a_{s-1}\ = \ c_{s-1}\ {\rm{and}}\ a_s<c_s,
\end{equation}
$a_{s+1}, \dots, a_{s+\ell} \ = \  0$ for some $\ell \ge 0$,
and $\{b_i\}_{i=1}^{m-s-\ell}$ (with $b_i = a_{s+\ell+i}$) is legal or empty.

\een
\end{defi}

The following theorem is due to \cite{GT}, and stated in this form in \cite{MW}.
\begin{thm}[Generalized Zeckendorf's Theorem for PLRS]\label{thm:genZeckendorf} Let $\{H_n\}_{n=1}^\infty$ be a \emph{Positive Linear Recurrence Sequence}. Then there is a unique legal decomposition for each positive integer $N\ge 0$.
\end{thm}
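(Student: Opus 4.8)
The plan is to deduce Theorem~\ref{thm:genZeckendorf} from the following quantitative strengthening, which I would prove by strong induction on $m$. \emph{Claim:} for every $m\ge 1$, the evaluation map $(a_i)\mapsto\sum_i a_iH_i$ is a bijection from the set of legal coefficient sequences all of whose parts lie among $\{H_1,\dots,H_m\}$ (together with the empty sequence) onto the set of integers $\{0,1,\dots,H_{m+1}-1\}$. Granting the Claim, the theorem is immediate: since $H_{m+1}\to\infty$, every integer $N\ge 0$ lies in $\{0,\dots,H_{m+1}-1\}$ for all sufficiently large $m$, so it admits a legal decomposition; and since any legal decomposition of $N$ uses parts bounded by $H_m$ for some $m$, injectivity of the evaluation map forces that decomposition to be unique (with $N=0$ handled by the empty decomposition). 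So the whole content lies in the Claim.

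For the base case $m=1$ one checks directly — separating the degenerate case $L=1$ from $L\ge 2$ and using the initial condition $H_2=c_1H_1+1$ — that the legal sequences supported on $H_1=1$, together with the empty sequence, evaluate bijectively onto $\{0,\dots,H_2-1\}$. For the inductive step, assume the Claim for all indices below $m$ and take a legal sequence with parts $\le H_m$; split on the coefficient $a$ of the top term $H_m$. If $a=0$ the sequence is legal with parts $\le H_{m-1}$, and the induction hypothesis contributes exactly the values $\{0,\dots,H_m-1\}$. If $a>0$, the two cases in the definition of a \emph{legal} decomposition put us either in Case~1 — possible only when $m<L$, where it forces $a_i=c_i$ for all $i$ and hence the single value $c_1H_m+\dots+c_mH_1=H_{m+1}-1$ by the initial conditions — or in Case~2, where there is a drop position $s\in\{1,\dots,L\}$ with $a_1=c_1,\dots,a_{s-1}=c_{s-1}$ and $a_s<c_s$, followed by $\ell\ge 0$ forced zeros and then a legal-or-empty tail supported on parts $\le H_{m-s-\ell}$. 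Peeling off the forced prefix $\sum_{j=1}^{s-1}c_jH_{m+1-j}+a_sH_{m+1-s}$ and applying the recurrence $H_{m+1}=c_1H_m+\dots+c_LH_{m+1-L}$ repeatedly, the induction hypothesis applied to the tail should show that as $s$ ranges over $\{1,\dots,L\}$, $a_s$ over $\{0,\dots,c_s-1\}$, and the tail over its permitted evaluations, the resulting values sweep $\{H_m,\dots,H_{m+1}-1\}$ exactly once; this block is disjoint from the $a=0$ block since $H_m-1<H_m$.

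The main obstacle is the combinatorial bookkeeping of that last step: one must match the recursive shape of Case~2 — a prefix agreeing with $c_1,\dots,c_{s-1}$, a strict drop at $s$, a run of $\ell$ zeros, then an independent legal tail — to a telescoping of the recurrence, and then verify that the intervals produced by the different pairs $(s,a_s)$ tile $\{H_m,\dots,H_{m+1}-1\}$ with neither gaps nor overlaps. Particular care is needed at the threshold $m<L$ versus $m\ge L$, which are governed by the initial conditions and by the recurrence respectively, and in the induction itself: because the tail index $m-s-\ell$ can drop far below $m$, the argument genuinely requires the strong form of induction over all smaller indices rather than a single step.
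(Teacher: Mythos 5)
The paper does not actually prove Theorem~\ref{thm:genZeckendorf}: it is quoted as a known result, attributed to \cite{GT} and stated in this form in \cite{MW}, so there is no internal proof to compare against. Your interval-tiling bijection is essentially the standard argument behind those references, and the step you flag as the ``main obstacle'' does go through. Concretely: for a length-$m$ legal sequence with $a_1>0$ and drop index $s$ (which is uniquely determined as the first index where $a_i\neq c_i$, so the parametrization by $(s,a_s,\ell,\text{tail})$ is injective), the block ``$\ell$ zeros followed by a legal tail or nothing'' is exactly an arbitrary legal-or-empty sequence supported on parts $\le H_{m-s}$, so the strong induction hypothesis makes the tail sweep $\{0,\dots,H_{m+1-s}-1\}$; hence for fixed $s$ the values fill $\bigl[\sum_{j<s}c_jH_{m+1-j}+a_s^{\min}H_{m+1-s},\ \sum_{j\le s}c_jH_{m+1-j}-1\bigr]$ as $a_s$ varies (with $a_s^{\min}=1$ for $s=1$ and $0$ for $s\ge 2$), and the union over $s$ telescopes to $[H_m,\ \sum_{j}c_jH_{m+1-j}-1]$. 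For $m\ge L$ this is $[H_m,H_{m+1}-1]$ by the recurrence; for $m<L$ it is $[H_m,H_{m+1}-2]$ by the initial conditions, and Case~1 contributes exactly the missing value $H_{m+1}-1$, so the tiling has no gaps or overlaps and is disjoint from the $a=0$ block $\{0,\dots,H_m-1\}$. As submitted your argument is a plan rather than a proof---the tiling verification is asserted with ``should show''---but the computation above is the only missing content, so the approach is sound and completable; if you want a written-out source, the proof appears in \cite{GT} and in the appendix of \cite{MW}.
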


The goal of this paper is to provide an introduction to the \emph{completeness} of PLRS's. This definition is from \cite{Br, HK}.

\begin{defi}
An arbitrary sequence of positive integers $\{f_i\}_{i=1}^\infty$ is \textbf{complete} if and only if every positive integer $n$ can be represented in the form $n=\sum_{i=1}^\infty \alpha_i f_i$, where $\alpha_i \in \{0,1\}$. A sequence that fails to be complete is \textbf{incomplete}.
\end{defi}

In other words, a sequence of positive integers is complete if and only if each positive integer can be written as a sum of unique terms of the sequence. The Fibonacci numbers are a motivating example. 

\begin{exa}
    The Fibonacci sequence, indexed from $\{1,2,\ldots\}$ is complete. This sequence, in particular with the correct initial conditions, is the PLRS defined by $H_{n+1}=H_n+H_{n-1}$. Then completeness follows from Zeckendorf's Theorem, as every positive integer has a unique decomposition, and critically, no sequence terms are used more than once. In fact, Zeckendorf's Theorem is a stronger statement than what is required for completeness.  Completeness does not require the decompositions to be unique, nor that they use only nonconsecutive terms.
\end{exa}

After seeing this example, does Theorem~\ref{thm:genZeckendorf} imply that all PLRS's are complete? Previous work in numeration systems by Gewurz and Merola \cite{GM} has shown that specific classes of recurrences as defined by Fraenkel \cite{Fr} are complete under their greedy expression. However, we cannot generalize this result to all PLRS's. For legal decompositions, the decomposition rule might permit sequence terms to be used more than once. This is not allowed for completeness decompositions, where each unique term from the sequence can be used at most once.
    \begin{exa}
        The PLRS $H_{n+1} = H_n + 3H_{n-1}$ has terms $\{1, 2, 5, 11, \ldots\}$. The unique \emph{legal} decomposition for $9$ is $1\cdot 5 + 2\cdot 2$, where the term $2$ is used twice. However, no \emph{complete} decomposition for $9$ exists. Adding all terms from the sequence less than $9$ is $1 + 2 + 5 = 8$, and to include $11$ or any subsequent term surpasses $9$.
    \end{exa}
It is not realistic to check that all terms of an infinite sequence have decompositions that use each term no more than once. Instead, we make use of the following criterion for completeness of a sequence, due to \cite{Br}. It allows us to simplify proving completeness for many specific PLRS's to induction proofs. 
\begin{thm}[Brown's Criterion]
	If $a_n$ is a nondecreasing sequence, then $a_n$ is complete if and only if $a_1 = 1$ and for all $n > 1$,
	\begin{equation}\label{eqn:BrownsCrit}
		a_{n + 1} \leq 1 + \sum_{i = 1}^{n} a_i.
	\end{equation}
\end{thm}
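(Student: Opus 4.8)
The plan is to prove the two implications of the biconditional separately. Throughout I would write $S_n := \sum_{i=1}^{n} a_i$ for the $n$th partial sum, and record at the outset that since each $a_i$ is a positive integer we have $S_n \geq n$, so $S_n \to \infty$; this is what ultimately lets a statement about initial segments of the sequence imply completeness.

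For the forward direction, I would assume $\{a_n\}$ is complete. Since the integer $1$ must be a sum of distinct terms of the sequence and every term is a positive integer, some $a_i$ equals $1$; as the sequence is nondecreasing, $a_1$ is its smallest term, so $a_1 = 1$. Next, suppose toward a contradiction that $a_{n+1} \geq S_n + 2$ for some $n$, and examine the integer $m = S_n + 1$. In any representation $m = \sum_i \alpha_i a_i$ with $\alpha_i \in \{0,1\}$, no index $j \geq n+1$ can occur, because $a_j \geq a_{n+1} \geq S_n + 2 > m$ — this is exactly where monotonicity is used. Hence $m$ is a sum of a subset of $\{a_1,\dots,a_n\}$, but every such subset sum is at most $S_n = m-1$, a contradiction. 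Therefore $a_{n+1} \leq S_n + 1$ for all $n$, which is \eqref{eqn:BrownsCrit}.

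For the reverse direction, I would assume $a_1 = 1$ and $a_{n+1} \leq S_n + 1$ for all $n \geq 1$, and prove by induction on $n$ that every integer $m$ with $1 \leq m \leq S_n$ is a sum of a subset of $\{a_1,\dots,a_n\}$. The base case $n=1$ is immediate since $S_1 = a_1 = 1$. For the inductive step, take $1 \leq m \leq S_{n+1} = S_n + a_{n+1}$. If $m \leq S_n$, the inductive hypothesis already gives a representation using only $a_1,\dots,a_n$. Otherwise $S_n < m \leq S_n + a_{n+1}$, so $0 \leq m - a_{n+1} \leq S_n$, where the left inequality uses precisely $a_{n+1} \leq S_n + 1$. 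If $m - a_{n+1} = 0$ then $m = a_{n+1}$ is a one-term representation; if $m - a_{n+1} \geq 1$ then by the inductive hypothesis $m - a_{n+1}$ is a sum of a subset of $\{a_1,\dots,a_n\}$, and adjoining the term $a_{n+1}$ (a new index) represents $m$. Since $S_n \to \infty$, every positive integer lies in $[1, S_n]$ for $n$ large enough, so the sequence is complete.

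The argument is elementary, so there is no deep obstacle; the points that need care are (i) reading ``distinct terms'' as ``each index used at most once,'' so that repeated values count with multiplicity and the maximal subset sum of the first $n$ terms is exactly $S_n$; (ii) the essential use of monotonicity in the forward direction to exclude large-index terms from a representation of $S_n + 1$; and (iii) setting up the reverse-direction induction on the number of terms $n$ rather than on $m$, so that the representation of $m - a_{n+1}$ is guaranteed to avoid the index $n+1$ and we may freely adjoin $a_{n+1}$.
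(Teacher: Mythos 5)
Your proof is correct, and it is worth noting that the paper itself gives no proof of this theorem at all: it is quoted from Brown's 1961 note \cite{Br} and used as a black box. Your argument is essentially the standard (indeed Brown's original) one: for necessity, test the integer $S_n+1$ and use monotonicity to rule out any term of index $\ge n+1$; for sufficiency, induct on $n$ to show every $m \in [1,S_n]$ is a subset sum of $\{a_1,\dots,a_n\}$, peeling off $a_{n+1}$ when $m > S_n$. Both halves are handled carefully, including the point that ``distinct terms'' means distinct indices (so repeated values are allowed and the maximal subset sum is exactly $S_n$), and the observation that sufficiency never actually uses the nondecreasing hypothesis. One small discrepancy with the statement as printed: the paper writes the inequality ``for all $n>1$,'' but your sufficiency induction (necessarily) uses it at $n=1$ as well, since $a_1=1$ alone does not control $a_2$ --- e.g.\ $1,3,3,\dots$ satisfies $a_1=1$ and the inequality for all $n\ge 2$ yet misses $2$. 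So the condition should be read as holding for all $n\ge 1$ (which your forward direction in fact establishes); with that reading, your proof is a complete and correct substitute for the citation.
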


\begin{nota}
We use the notation $[c_1, \ldots, c_L]$, which is the collection of all $L$ coefficients, to represent the PLRS $H_{n+1} = c_1 H_n + \cdots + c_L H_{n+1-L}$.
\end{nota}

A simple case to consider is when all coefficients in $[c_1,\ldots,c_L]$ are strictly positive. The following result, proved in Section \ref{sec:modifying}, completely characterizes these sequences are either complete or incomplete. 

\begin{thm}\label{basic}
    If $\{H_n\}$ is a PLRS generated by all positive coefficients $[c_1,\ldots,c_L]$, then sequence is complete if and only if the coefficients are $[\underbrace{1,\ldots,1}_L]$ or $[\underbrace{1,\ldots,1}_{L-1},2]$ for $L \geq 1$.
\end{thm}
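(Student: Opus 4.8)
The plan is to reduce everything to \textbf{Brown's Criterion}. A PLRS automatically has $H_1 = 1$, and since $c_1 \ge 1$ and all other $c_i \ge 0$ we get $H_{n+1} \ge c_1 H_n \ge H_n$, so $\{H_n\}$ is nondecreasing; hence by Brown's Criterion $\{H_n\}$ is complete if and only if $H_{n+1} \le 1 + S_n$ for every $n \ge 1$, where $S_n := \sum_{i=1}^n H_i$. Write $D_n := 1 + S_n - H_{n+1}$ for the ``slack''; the whole problem becomes: for which positive tuples $[c_1,\dots,c_L]$ is $D_n \ge 0$ for all $n$?

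First I would handle the range $1 \le n < L$, where the initial conditions say $H_{n+1} = c_1 H_n + \cdots + c_n H_1 + 1$. Reindexing $S_n = \sum_{i=1}^n H_{n+1-i}$ and subtracting gives the clean identity $D_n = \sum_{i=1}^n (1-c_i)\,H_{n+1-i}$. Since every $c_i \ge 1$ and every $H_j > 0$, this shows $D_n \le 0$ for $n < L$, with equality iff $c_1 = \cdots = c_n = 1$. Hence if $c_j \ge 2$ for some $j \le L-1$, choosing the least such $j$ gives $D_j = 1 - c_j < 0$, so Brown's Criterion fails and the sequence is incomplete. This already eliminates every tuple except those of the form $[1,\dots,1,c_L]$ (and note that when $L = 1$ this block is vacuous, so every tuple $[c_1]$ is of this shape).

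For a tuple $[1,\dots,1,c_L]$ I would then compute the first term actually governed by the recurrence: $H_{L+1} = H_L + H_{L-1} + \cdots + H_2 + c_L H_1 = (H_L + \cdots + H_1) + (c_L - 1) = S_L + (c_L - 1)$, so $D_L = 2 - c_L$. Thus $c_L \ge 3$ already makes Brown's Criterion fail at $n = L$, so those are incomplete, leaving only $c_L = 1$ and $c_L = 2$, the two claimed families; it remains to prove these are complete. For $[1,\dots,1]$: $D_n = 0$ for $n < L$ by the identity above, $D_L = 1 > 0$ since $H_{L+1} = S_L$, and for $n > L$ the term $H_{n+1} = H_n + H_{n-1} + \cdots + H_{n+1-L}$ is a sum of $L$ consecutive earlier terms, hence $\le S_n$, so $D_n \ge 0$. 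For $[1,\dots,1,2]$: I would prove by strong induction that $H_{n+1} = 1 + S_n$ for all $n \ge 1$ (equivalently $H_n = 2^{n-1}$). The cases $n \le L$ follow from the identity (for $n<L$) and from the $H_{L+1}$ computation with $c_L = 2$; for $n > L$, $H_{n+1} = (H_n + \cdots + H_{n+1-L}) + H_{n+1-L}$, and the inductive hypothesis at index $n-L$ rewrites $H_{n+1-L}$ as $1 + S_{n-L}$, giving $H_{n+1} = 1 + S_n$. Then Brown's Criterion holds with equality throughout, so the sequence is complete.

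The routine content is the index bookkeeping in the identity for $D_n$ and in the induction; the steps I expect to be delicate are (i) the sign-and-equality analysis of $D_n$ for $n < L$, which is the heart of the incompleteness direction since it instantly forbids any non-leading coefficient exceeding $1$, (ii) the passage from the initial conditions to the recurrence at $n = L$, which cleanly isolates the role of $c_L$, and (iii) ensuring the strong induction in the $[1,\dots,1,2]$ case has its hypothesis available at index $n-L$ (valid precisely because $n > L$). I would also keep the degenerate case $L = 1$ explicitly in view throughout, since there the formulas reduce to $H_2 = c_L$ and $D_1 = 2 - c_L$ and still yield exactly the stated dichotomy.
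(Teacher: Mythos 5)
Your proposal is correct and follows essentially the same route as the paper: Brown's criterion applied to the initial conditions forces $c_i = 1$ for $i < L$, the first genuine recurrence term forces $c_L \le 2$, and sufficiency is verified by induction. The only cosmetic difference is that you split sufficiency into the $c_L = 2$ case (proving $H_n = 2^{n-1}$ exactly, which the paper records separately as a corollary) and the $c_L = 1$ case (a direct termwise bound), whereas the paper runs a single strong induction on Brown's inequality covering both values of $c_L$.
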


The situation becomes much more complicated when we consider all PLRS's that have at least one $0$ as a coefficient. In order to be able to make progress on determining completeness of these PLRS's, we develop several additional tools. The following three theorems are results that allow certain modifications of the coefficients $[c_1, \ldots, c_L]$ that generate a PLRS that is known to be complete or incomplete, and preserve completeness or incompleteness. They are proven in Section \ref{sec:modifying}.

\begin{thm}\label{lem:incompAddCoeff}
    Consider sequences $\{ G_{n} \} = [c_1,\dots, c_{L}]$ and $\{ H_{n} \}= [c_1,,\dots, c_{L},c_{L+1}]$, where $c_{L+1}$ is any positive integer. If $\{ G_{n} \}$ is incomplete, then $\{ H_{n} \}$ is incomplete as well.
\end{thm}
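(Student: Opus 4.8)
The plan is to use Brown's criterion on both sequences and show that if $\{H_n\}$ were complete, then $\{G_n\}$ would have to be complete as well, contradicting the hypothesis. First I would note that since $\{G_n\}$ is incomplete and is a PLRS (so $G_1 = 1$), Brown's criterion tells us there is some index $m > 1$ at which the inequality first fails, i.e. $G_{m+1} > 1 + \sum_{i=1}^m G_i$. The goal is to locate a corresponding failure in $\{H_n\}$. The key observation is that the two sequences share the same initial conditions and recurrence coefficients for the first $L$ slots, so they agree term-by-term for small indices; specifically, $H_n = G_n$ for $1 \le n \le L$, since the initial conditions in Definition~\ref{defn:goodrecurrencereldef} only involve $c_1, \dots, c_{n}$ when $n < L$, and the value $c_{L+1}$ does not enter until we compute $H_{L+2}$.

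The main case analysis then splits on where the first failure index $m$ for $\{G_n\}$ sits relative to $L$. \emph{Case 1: $m + 1 \le L$.} Here the failure $G_{m+1} > 1 + \sum_{i=1}^m G_i$ transfers verbatim to $\{H_n\}$ because $H_i = G_i$ for all $i \le m+1 \le L$, so $H_{m+1} > 1 + \sum_{i=1}^m H_i$ and $\{H_n\}$ is incomplete. \emph{Case 2: $m + 1 > L$.} This is the substantive case. The idea is that appending a coefficient $c_{L+1} \ge 1$ can only make later terms \emph{larger} relative to the partial sums that precede them, never smaller. More precisely, I would compare the ``deficiency'' $D_n^G := 1 + \sum_{i=1}^n G_i - G_{n+1}$ with $D_n^H := 1 + \sum_{i=1}^n H_i - H_{n+1}$ and argue by induction that for $n$ in the relevant range, $D_n^H \le D_n^G$ (roughly: inserting $c_{L+1}H_{n+1-L-1}$ into the recurrence increases $H_{n+1}$ by a nonnegative amount compared to the analogous $G$-term, while the extra contributions to the partial sum $\sum H_i$ are dominated by this). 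Since $D_m^G < 0$, we would get $D_m^H < 0$ for the appropriate shifted index, giving a Brown's-criterion failure for $\{H_n\}$.

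The main obstacle I anticipate is Case 2, and specifically making the comparison between $\{G_n\}$ and $\{H_n\}$ rigorous, because the two recurrences have different \emph{orders} ($L$ versus $L+1$), so term $H_{n+1}$ is built from $H_n, \dots, H_{n-L}$ while $G_{n+1}$ is built from $G_n, \dots, G_{n-L+1}$ — the sequences genuinely diverge once $n+1 \ge L+2$, and one cannot simply say $H_n \ge G_n$ termwise without checking it. I would establish the termwise inequality $H_n \ge G_n$ for all $n$ first (by strong induction, using $c_{L+1} \ge 1 > 0$ and that all other coefficients agree), and then carefully track how the partial sums grow: the cleanest route is probably to show that the first Brown failure for $\{G_n\}$ forces $G_{m+1} - \sum_{i=1}^m G_i$ to be large, and then bound $H_{m+1}$ below and $\sum_{i=1}^m H_i$ above using $H_i \ge G_i$ together with the explicit recurrence for $H_{m+1}$. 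An alternative, possibly slicker, approach would be to observe that $\{H_n\}$ satisfies $H_{n+1} \ge c_1 H_n + \cdots + c_L H_{n+1-L}$ with equality replaced by a genuine inequality (because of the extra $c_{L+1}H_{n-L} \ge H_{n-L} \ge 1$ term), so $\{H_n\}$ ``dominates'' the PLRS $\{G_n\}$ in a strong enough sense that a Brown failure is inherited; I would check whether the paper's earlier lemmas already package this domination, and if so, cite them rather than reprove it.
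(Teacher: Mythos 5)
Your overall strategy coincides with the paper's: compare the Brown slack of the two sequences (your $D_n^H$ versus $D_n^G$ is the paper's $B_{H,n+1}$ versus $B_{G,n+1}$), note that the sequences agree through index $L$ (in fact $H_{L+1}=G_{L+1}+1$, since the initial-condition regime of $\{H_n\}$ extends one step further), and transfer the first Brown failure of $\{G_n\}$ to $\{H_n\}$ by an induction showing $D_n^H\le D_n^G$. The gap is that you never establish this comparison, and it is the entire content of the theorem. Unwinding it, what must be shown is $H_{n+1}-G_{n+1}\ \ge\ \sum_{i=1}^{n}(H_i-G_i)$, i.e.\ the gain in the new term outpaces the accumulated gain in the partial sums; your parenthetical ``the extra contributions to the partial sum are dominated by this'' simply asserts it. The paper proves it via a doubling estimate on the difference sequence, $H_{L+k}-G_{L+k}\ge 2\left(H_{L+k-1}-G_{L+k-1}\right)$ for $k\ge 2$, seeded by $H_{L+1}-G_{L+1}=1$ (that estimate itself requires a separate induction, deferred to Appendix~B of the full paper \cite{BHLLMT}); summing the resulting geometric growth is what yields $B_{H,n}\le B_{G,n}$. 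Nothing playing this role appears in your plan.

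Moreover, the two routes you offer for closing the gap would not work as written. Bounding $\sum_{i=1}^m H_i$ \emph{above} ``using $H_i\ge G_i$'' is backwards: that inequality bounds the partial sum from below, which is the wrong direction for exhibiting $H_{m+1}>1+\sum_{i\le m}H_i$. And the ``slicker'' alternative--that $\{H_n\}$ termwise dominating an incomplete sequence inherits incompleteness--is false in principle, because enlarging terms enlarges both sides of Brown's inequality: the complete sequence $\{2^{n-1}\}$ termwise dominates the incomplete PLRS generated by $[1,0,0,0,0,0,15]$ from Section~\ref{sec:modifying} (its terms are $1,2,\dots,7,22,52,\dots$ and stay below $2^{n-1}$, as its growth ratio is below $2$), yet it is complete; no earlier lemma in the paper packages domination in this way. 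So the missing idea is precisely the quantitative lemma that the appended coefficient forces the differences $H_n-G_n$ to grow at least like powers of $2$, fast enough to swallow their own partial sums; without it the induction in your Case~2 cannot be carried out.
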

\begin{thm}\label{decreaseLastCoe}
    Consider sequences $\{G_n\}=[c_1,\ldots, c_{L-1}, c_L]$ and $\{H_n\}=[c_1,\ldots, c_{L-1}, k_L]$, where $1 \leq k_L \leq c_L$. If $\{G_n\}$ is complete, then $\{H_n\}$ is also complete.
    \end{thm}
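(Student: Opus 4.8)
The plan is to verify Brown's criterion for $\{H_n\}$. Every PLRS is strictly increasing with first term $1$, so it suffices to prove $H_{n+1}\le 1+\sum_{i=1}^{n}H_i$ for all $n\ge 1$; I will deduce this from the corresponding inequality $B_n:=1+\sum_{i=1}^{n}G_i-G_{n+1}\ge 0$ for $\{G_n\}$, which holds since $\{G_n\}$ is complete. We may assume $L\ge 2$ (for $L=1$ the claim is contained in Theorem~\ref{basic}) and $k_L<c_L$ (otherwise $\{H_n\}=\{G_n\}$).

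First, two easy facts. Because the initial conditions of a PLRS only involve $c_1,\dots,c_{L-1}$, we have $G_n=H_n$ for $1\le n\le L$; and comparing the recurrences term by term for $n\ge L$ and using $1\le k_L\le c_L$ with positivity gives $H_n\le G_n$ for all $n$, by induction.

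The heart of the argument is the monotonicity lemma: $G_m/H_m$ is non-decreasing in $m$, equivalently $D_m:=G_{m+1}H_m-H_{m+1}G_m\ge 0$ for all $m$. I would prove this by strong induction on $m$. From the two facts above, $D_m=0$ for $m<L$ and $D_L=(c_L-k_L)H_L\ge 0$. For $m\ge L+1$, expanding $G_{m+1}$ and $H_{m+1}$ by their recurrences, the $j=1$ terms vanish identically; under the inductive hypothesis the interior coefficients $c_2,\dots,c_{L-1}$ contribute nonpositively, while the last coefficient contributes the positive term $(c_L-k_L)G_{m+1-L}H_m$ plus a nonpositive remainder, so $D_m\ge 0$ reduces to showing that $(c_L-k_L)G_{m+1-L}$ dominates a certain weighted sum of the previous $D_i$. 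This is where the completeness of $\{G_n\}$ enters essentially: completeness forces the early coefficients to be small (in particular $c_1=1$ when $L\ge 2$, and Brown's criterion at low indices bounds $c_2,c_3,\dots$), which keeps the ``wrong-sign'' interior terms from overpowering the positive one. (The lemma indeed fails for incomplete $\{G_n\}$ -- for example $G_m/H_m$ is not monotone for the pair $[1,0,5,0,2]$ and $[1,0,5,0,1]$.) I expect this lemma to be the main obstacle; the remainder of the proof is formal.

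Granting the lemma, the conclusion is quick. Non-decreasing ratios give $G_mH_i\ge H_mG_i$ for all $i\le m$; with $m=n+1$, summing over $i=1,\dots,n$ gives $G_{n+1}\sum_{i=1}^{n}H_i\ge H_{n+1}\sum_{i=1}^{n}G_i$. Set $r:=H_{n+1}/G_{n+1}\in(0,1]$. Then
\[
\Bigl(1+\sum_{i=1}^{n}H_i-H_{n+1}\Bigr)-rB_n=(1-r)+\frac{1}{G_{n+1}}\Bigl(G_{n+1}\sum_{i=1}^{n}H_i-H_{n+1}\sum_{i=1}^{n}G_i\Bigr)\ \ge\ 0,
\]
since $1-r\ge 0$ (as $H_{n+1}\le G_{n+1}$) and the second summand is nonnegative by the inequality just displayed. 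As $B_n\ge 0$ by Brown's criterion for $\{G_n\}$, this gives $1+\sum_{i=1}^{n}H_i-H_{n+1}\ge rB_n\ge 0$, which is Brown's criterion for $\{H_n\}$. Therefore $\{H_n\}$ is complete.
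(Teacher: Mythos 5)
Your overall architecture is sound: granting your monotonicity lemma ($D_m:=G_{m+1}H_m-H_{m+1}G_m\ge 0$ for all $m$), the final displayed identity is correct and does yield Brown's criterion for $\{H_n\}$, and the preliminary facts ($G_n=H_n$ for $n\le L$, $H_n\le G_n$, $D_m=0$ for $m<L$, $D_L=(c_L-k_L)H_L$) are all fine. The problem is that the lemma itself — the entire substance of the proof — is not established. In your inductive step the $j=1$ terms cancel and the inductive hypothesis tells you only the \emph{sign} of the earlier differences $G_{m+1-j}H_m-H_{m+1-j}G_m$ (nonpositive), not their size; so you are left needing $(c_L-k_L)G_{m+1-L}H_m$ to dominate a weighted sum of quantities about which the induction gives no quantitative control, and both sides grow at comparable exponential rates, so this is a genuinely delicate estimate. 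Your appeal to ``completeness forces the early coefficients to be small'' is not an argument: Brown's criterion at small indices bounds the $c_i$, but nothing in the proposal converts those bounds into the required domination, and you explicitly concede the point (``I expect this lemma to be the main obstacle''). As written, the proof therefore has a genuine gap at its central step.

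For contrast, the paper avoids ratios entirely. It argues by contradiction from a minimal index $m$ at which $\{H_n\}$ fails Brown's criterion, reduces (using $G_i=H_i$ for $i\le L$ and Brown's criterion for $\{G_n\}$) to the strict inequality $H_m-\sum_{i=L+1}^{m-1}H_i>G_m-\sum_{i=L+1}^{m-1}G_i$, and then refutes it by an induction whose only technical input is the linear difference estimate $H_{L+k+1}-2H_{L+k}\le G_{L+k+1}-2G_{L+k}$, i.e.\ $G_{n+1}-H_{n+1}\ge 2\,(G_n-H_n)$. That additive inequality about the difference sequence is much more tractable than your multiplicative (ratio/determinant) statement, because the recurrence acts linearly on $G_n-H_n$; if you want to rescue your write-up, replacing the ratio lemma by a difference-based estimate of this kind is the natural fix.
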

\begin{thm}\label{Adding M Theorem}    
    Consider sequences $\{G_n\}=[c_1,\ldots, c_{L-1}, c_L]$ and $\{H_n\}=[c_1,\ldots, c_{L-1} + c_L]$. If $\{G_n\}$ is incomplete, then $\{H_n\}$ is also incomplete.
\end{thm}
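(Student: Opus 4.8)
The plan is to work entirely through Brown's criterion. Both $\{G_n\}$ and $\{H_n\}$ are PLRS's, so $G_1=H_1=1$ and both are strictly increasing; hence $\{G_n\}$ being incomplete means there is a \emph{least} index $n_0$ with $G_{n_0+1}\ge 2+\sum_{i=1}^{n_0}G_i$, and it suffices to produce some $m$ with $H_{m+1}\ge 2+\sum_{i=1}^{m}H_i$. Abbreviate $P^G_n:=1+\sum_{i=1}^nG_i-G_{n+1}$ and define $P^H_n$ likewise, so the goal is that $\{P^H_n\}$ attain a negative value. Put $e_i:=H_i-G_i$. Since $\{H_n\}$ has order $L-1$ with coefficient string $(c_1,\dots,c_{L-2},c_{L-1}+c_L)$ while $\{G_n\}$ has order $L$, comparing the two sets of initial conditions gives $e_i=0$ for $1\le i\le L-1$, and a short computation at index $L$ gives $e_L=c_L-1$. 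Subtracting the two recurrences, for $n\ge L$,
\[
e_{n+1}\ =\ c_1e_n+c_2e_{n-1}+\cdots+c_{L-2}e_{n+3-L}+(c_{L-1}+c_L)e_{n+2-L}+c_L\bigl(G_{n+2-L}-G_{n+1-L}\bigr);
\]
since $\{G_n\}$ is increasing, an easy induction gives $e_i\ge0$ (in fact $\{e_i\}$ nondecreasing) for all $i$, i.e.\ $H_i\ge G_i$, and the recurrence also yields $e_{L+1}=c_1(2c_L-1)$. A useful structural fact is that the characteristic polynomials obey $p_G(x)=x\,p_H(x)+c_L(x-1)$, which forces the dominant roots to satisfy $\lambda_G<\lambda_H$.

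Next I would clear away the cases where the conclusion is immediate. If $\lambda_G\ge 2$ then $\lambda_H>2$, and any PLRS with dominant root exceeding $2$ is incomplete (its sequence of Brown deficits is eventually strictly decreasing and tends to $-\infty$), so $\{H_n\}$ is incomplete; thus we may assume $\lambda_G<2$. If $n_0\le L-2$ then $H_i=G_i$ for $i\le n_0+1$, so $P^H_{n_0}=P^G_{n_0}<0$; if $n_0=L-1$ then $\sum_{i\le L-1}H_i=\sum_{i\le L-1}G_i$ and $H_L=G_L+(c_L-1)$, so $P^H_{L-1}=P^G_{L-1}-(c_L-1)<0$; and if $n_0=L$ then, using $\sum_{i\le L}H_i=\sum_{i\le L}G_i+(c_L-1)$ and $H_{L+1}=G_{L+1}+c_1(2c_L-1)$, we get $P^H_L=P^G_L+(c_L-1)-c_1(2c_L-1)$, which is $<P^G_L<0$ because $c_1(2c_L-1)\ge 2c_L-1>c_L-1$. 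Finally, a PLRS with $c_1\ge 2$ already fails Brown at $n=1$ (recall $L\ge2$, so $G_2=c_1+1>2$), so in the remaining case $c_1=1$. We are therefore reduced to: $c_1=1$, $\lambda_G<2$, and $n_0\ge L+1$.

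In this case the plan is to show $\{H_n\}$ fails Brown's criterion by index $n_0$. Using $P^G_n-P^H_n=e_{n+1}-\sum_{i=1}^ne_i$, it would suffice to prove $e_{n+1}\ge\sum_{i=1}^ne_i$ for all $n\le n_0$, giving $P^H_{n_0}\le P^G_{n_0}<0$. It is important to note that the unrestricted inequality $e_{n+1}\ge\sum_{i\le n}e_i$ is \emph{false}: it can fail even with $G$ and $H$ both complete, because once $\lambda_H<2$ the partial sums $\sum e_i$ eventually overtake $e_{n+1}$. So one cannot simply compare Brown's sums termwise forever; one must localize. Since $\lambda_G<2$, the increments $P^G_n-P^G_{n-1}=G_n-(c_2G_{n-1}+\cdots+c_LG_{n+1-L})$ are eventually positive, so $\{P^G_n\}$ is eventually nondecreasing and hence $n_0$ is bounded by an explicit function of the recurrence data. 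On this bounded window I would then prove $e_{n+1}\ge\sum_{i\le n}e_i$ by strong induction, using the displayed recurrence for $e_{n+1}$, the base value $e_L=c_L-1$, the hypothesis that $P^G_k\ge 0$ (hence $G_{k+1}\le 1+\sum_{i\le k}G_i$) for $k<n_0$ to keep the intermediate $G_i$ and $e_i$ from growing too fast, and — when it applies — the remark that if $P^H_k<0$ for some $k<n_0$ we are already done.

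The main obstacle is precisely that last step: bounding $\sum_{i=1}^{n_0}e_i$ against $e_{n_0+1}$ on the pre-failure window. Crude exponential estimates on $e_i$ do not suffice — the terms $(c_j-1)e_{n+1-j}$ with $c_j=0$ (for $2\le j\le L-2$) and the term $(c_{L-1}+c_L-2)e_{n+2-L}$ when $c_{L-1}=0,\,c_L=1$ are the awkward ones — so one must genuinely exploit that $G$ has not yet violated Brown's criterion, together with an explicit bound on $n_0$ in terms of $L$. Proving that bound cleanly (or invoking a finiteness reduction of the kind that typically accompanies completeness questions for PLRS's) is, I expect, where the real work lies.
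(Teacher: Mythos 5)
Your proposal is a plan rather than a proof: after the reductions, the entire weight of the theorem rests on two statements you explicitly leave unproved, namely (i) an effective upper bound on the first failure index $n_0$ of $\{G_n\}$ when $\lambda_G<2$, and (ii) the window inequality $e_{n+1}\ge\sum_{i\le n}e_i$ for $n\le n_0$, which is exactly what makes $P^H_{n_0}\le P^G_{n_0}<0$. For (i), no bound is derived; obtaining a clean one is closely tied to the paper's open Conjecture~\ref{2Lcrit}, and even a crude coefficient-dependent bound is not written down. For (ii), you yourself say this is ``where the real work lies,'' and the awkward terms you identify (zero coefficients in the middle, and $c_{L-1}=0$, $c_L=1$) are precisely where a naive induction on the recurrence for $e_{n+1}$ breaks. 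In addition, the reduction for $\lambda_G\ge 2$ invokes asymptotic facts about PLRS's (a unique dominant root, Brown deficits tending to $-\infty$ when the root exceeds $2$) that are not established in this paper and would need proof or citation. So the case $c_1=1$, $\lambda_G<2$, $n_0\ge L+1$ --- the heart of the matter --- remains open in your write-up.

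For contrast, the paper never attempts the full-jump comparison of $[c_1,\ldots,c_L]$ with $[c_1,\ldots,c_{L-1}+c_L]$. It factors the move into $c_L$ unit transfers: Lemma~\ref{IncompExtension} shows that passing from $[c_1,\ldots,c_{L-1},c_L]$ to $[c_1,\ldots,c_{L-1}+1,c_L-1]$ preserves incompleteness, this is applied $c_L-1$ times, and Lemma~\ref{Last Case Adding M Theorem} absorbs the final $1$ into $c_{L-1}$. Each step is handled by showing the Brown gaps only decrease, $B_{H,m}\le B_{G,m}$, driven by the two-term inequality $H_{n+1}-G_{n+1}\ge 2\left(H_n-G_n\right)$, which is far easier to control than your sum inequality because the difference sequence for a unit transfer starts at exactly $1$. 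Your asymptotic observation that $e_{n+1}\ge\sum_{i\le n}e_i$ must eventually fail once the modified dominant root is below $2$ is correct, and it in fact shows that the ``for all $m$'' phrasing in the paper's own lemma proofs is stronger than what can hold in general; but all that is needed, in their approach and in yours, is the comparison on the initial window up to the first Brown failure of $\{G_n\}$. Noticing this subtlety is valuable; it does not, however, substitute for proving the window inequality, so the proposal as it stands has a genuine gap.
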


The next two theorems are results that classify two families of PLRS's as complete or incomplete. They are shown in Section \ref{sec:famiilies}.

\begin{thm}\label{thm:1onekzero} The sequence generated by $[1,\underbrace{0,\ldots,0}_k,N]$ is complete if and only if $1 \leq N \leq \left\lceil(k+2)(k+3)/{4}\right\rceil$.
\end{thm}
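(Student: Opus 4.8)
The proof runs through Brown's criterion. With $L=k+2$ the recurrence is $H_{n+1}=H_n+NH_{n-k-1}$, and the initial conditions give $H_m=m$ for $1\le m\le k+2$. Telescoping $H_{m+1}-H_m=NH_{m-k-1}$ yields the identity $H_{n+1}=(k+2)+N\,S_{n-k-1}$ for all $n\ge k+1$, where $S_j=\sum_{i=1}^{j}H_i$ and $S_0=0$. Since $\{H_n\}$ is increasing with $H_1=1$, Brown's criterion says $\{H_n\}$ is complete if and only if $D_n:=1+S_n-H_{n+1}\ge 0$ for all $n\ge 1$; a direct computation shows $D_n\ge 0$ for $0\le n\le k+1$ (with no dependence on $N$), so only the range $n\ge k+2$ is at issue.

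Next I would derive, for $n\ge k+2$, the self-referential identity
\[
D_n \;=\; D_{n-k-2}\;+\;\Bigl(\sum_{j=n-k}^{\,n-1}H_j\Bigr)\;-\;(N-2)\,H_{n-k-1},
\]
obtained by writing $D_n=1+S_{n-1}-NH_{n-k-1}$ and splitting $S_{n-1}=S_{n-k-2}+\sum_{j=n-k-1}^{n-1}H_j$. In particular $D_n\ge D_{n-k-2}$ as soon as the inequality
\[
(\star_p)\colon\qquad \sum_{i=1}^{k}H_{p+i}\ \ge\ (N-2)\,H_p
\]
holds with $p=n-k-1$. Hence, if $(\star_p)$ holds for every $p\ge 1$, then $D_n\ge D_{n-k-2}$ for all $n\ge k+2$; descending by $k+2$ repeatedly until the index falls into $\{0,\dots,k+1\}$ then gives $D_n\ge 0$ for all $n$, so $\{H_n\}$ is complete. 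Thus the ``complete'' direction reduces to establishing $(\star_p)$ for all $p$ whenever $1\le N\le N^\star:=\lceil (k+2)(k+3)/4\rceil$.

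I would prove $(\star_p)$ for all $p\ge1$ by strong induction. The base cases $1\le p\le k+2$ become explicit polynomial inequalities in $N$ after substituting $H_m=m$ (for $m\le k+2$) and $H_{k+2+t}=(k+2)+N S_t$; all of them hold for $N\le N^\star$, and the binding one is $p=2$, which rearranges to $2N\le 1+(k+2)(k+3)/2$, i.e.\ exactly $N\le N^\star$ (using that $(k+2)(k+3)/2$ is an integer to reconcile with the ceiling). For the inductive step, fix $p\ge k+3$ and put $m=p-1\ge k+2$; peeling the first and last summands off $\sum_{i=1}^{k}H_{m+1+i}$ and substituting the closed forms for $H_{m}$, $H_{m+1}$, $H_{m+k+1}$ from the telescoped identity, the inequality $(\star_p)$ collapses to $(\star_{p-1})$ together with $\sum_{j=m-k}^{m-1}H_j\ge(N-2)H_{m-k-1}$, and the latter is just $(\star_{p-k-2})$; both indices are smaller than $p$, so the induction closes. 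For the converse, if $N>N^\star$ then the Brown inequality already fails at $n=k+3$: there $H_{k+4}=(k+2)+3N$ while $1+S_{k+3}=1+(k+2)(k+3)/2+(k+2)+N$, and $N>N^\star$ forces $H_{k+4}>1+S_{k+3}$, so $\{H_n\}$ is incomplete. Combining the two directions gives the stated characterization.

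The step I expect to be the main obstacle is the inductive step for $(\star_p)$: the algebra must collapse cleanly so that $(\star_p)$ is implied by precisely $(\star_{p-1})$ and $(\star_{p-k-2})$ with no residual error terms. The $p=2$ base case also needs careful bookkeeping with the ceiling function, and the degenerate regime (notably $k=0$, where $(\star_p)$ reads $0\ge(N-2)H_p$) should be checked so that the threshold $N^\star$ comes out correctly there as well.
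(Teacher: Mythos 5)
Your proposal is correct and takes essentially the same approach as the paper: compute the initial terms $H_i=i$, obtain necessity from Brown's criterion at $H_{k+4}=H_{k+3}+2N$, and obtain sufficiency from the inequality $(N-2)H_{n-k-1}\le H_{n-1}+\cdots+H_{n-k}$ (your $(\star_p)$), which is exactly the inductive claim the paper asserts without detail. The only cosmetic differences are that you prove completeness directly for every $N\le\lceil(k+2)(k+3)/4\rceil$ rather than treating $N_{\max}$ and invoking Theorem \ref{decreaseLastCoe}, and that you spell out the strong-induction step the paper omits.
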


\begin{thm}\label{2onekzero}
The sequence generated by $[1, 1, \underbrace{0, \dots, 0}_{k}, N]$ is complete if and only if $1 \leq N \leq \lfloor (F_{k + 6} - k - 5)/4 \rfloor$, where $F_n$ are the Fibonacci numbers with $F_1=1, F_2=2$.
\end{thm}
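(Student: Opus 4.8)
The plan is to apply Brown's criterion directly. Write $\{H_n\}$ for the PLRS generated by $[1,1,\underbrace{0,\dots,0}_k,N]$, so that $H_{n+1} = H_n + H_{n-1}$ for the first several indices (as forced by the initial conditions, since a PLRS with these coefficients begins $1,2,3,5,\dots$ like the Fibonacci numbers until the $N$-term kicks in), and then $H_{n+1} = H_n + H_{n-1} + N H_{n-k-2}$ once $n$ is large enough. The first key step is to nail down the initial segment: because $c_1=c_2=1$ and $c_3=\cdots=c_{k+2}=0$, the initial conditions of Definition~\ref{defn:goodrecurrencereldef} give $H_1=1$, $H_2=2$, $H_3=3$, and in general $H_j = F_{j+1}$ (with the indexing $F_1=1,F_2=2$) for $j = 1, \dots, k+3$, and then the first ``true'' recurrence step is $H_{k+4} = H_{k+3} + H_{k+2} + N H_1 = F_{k+4} + N$. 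I would verify this carefully since the exact index at which $N$ first appears controls everything.

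The second step is to reduce Brown's inequality $H_{n+1} \le 1 + \sum_{i=1}^n H_i$ to a single critical inequality. On the Fibonacci-like initial segment the inequality holds automatically (Fibonacci numbers are complete). Using the telescoping identity $\sum_{i=1}^n F_{i+1} = F_{n+3} - 2$, together with the recurrence, I expect the inequality $H_{n+1}\le 1+\sum_{i=1}^n H_i$ to be \emph{tightest} exactly at the step where $N$ first enters, namely $n = k+3$: there we need $H_{k+4} = F_{k+4}+N \le 1 + \sum_{i=1}^{k+3} H_i = 1 + (F_{k+6}-2) = F_{k+6}-1$, i.e. $N \le F_{k+6} - F_{k+4} - 1 = F_{k+5} - 1$. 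This does not match the claimed bound, so the real content is that the \emph{binding} constraint comes one or two steps later, once several copies of $N$ have accumulated on the right-hand side but $N$ is being multiplied into a growing $H$ on the left; one must compare the growth of $NH_{n-k-2}$ against the partial sums $\sum N H_{i-k-2}$ and find the index $n$ minimizing the slack. Carrying out this optimization — writing $S_n = 1 + \sum_{i=1}^n H_i - H_{n+1}$, showing $S_n$ is eventually nondecreasing in $n$ (so that if the inequality holds at the critical index it holds forever, by an induction using the recurrence to relate $S_{n+1}-S_n$ to earlier terms), and then identifying the minimizing index and simplifying the resulting closed form to $\lfloor (F_{k+6}-k-5)/4\rfloor$ — is the technical heart of the argument.

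For the ``only if'' direction one exhibits the single violated instance of Brown's criterion when $N$ exceeds the bound; this is immediate once the critical index has been located, since at that index the inequality fails by at least one. For the ``if'' direction one shows the inequality holds at the critical index for $N$ at most the bound, and then propagates it to all larger $n$ by the monotonicity of $S_n$ established above, invoking the recurrence $H_{n+1}=H_n+H_{n-1}+NH_{n-k-2}$ to handle the inductive step.

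The main obstacle I anticipate is pinning down exactly which index $n$ gives the binding constraint and then evaluating the resulting sum in closed form: the partial sums $\sum_{i} H_i$ satisfy their own linear recurrence (add the recurrences), and one must combine the Fibonacci part (handled by $\sum F_{i+1} = F_{n+3}-2$) with the accumulating $N$-contributions, then extract the floor expression $\lfloor(F_{k+6}-k-5)/4\rfloor$, where the $-k-5$ and the division by $4$ presumably arise from summing an arithmetic-like series of the partial contributions of $N$ and from a parity/rounding analysis. Getting the off-by-one bookkeeping right (both in the index where $N$ first appears and in where the minimum slack occurs) is where the care is needed; everything else is a routine induction via Brown's criterion.
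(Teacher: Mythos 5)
There is a genuine gap, and it starts with the very first concrete computation. With coefficients $[1,1,\underbrace{0,\dots,0}_k,N]$ the initial conditions of Definition~\ref{defn:goodrecurrencereldef} carry a ``$+1$'' at every step ($H_{n+1}=c_1H_n+\cdots+c_nH_1+1$ for $n<L$), so the sequence does \emph{not} begin $1,2,3,5,\dots$; it begins $1,2,4,7,12,\dots$, i.e.\ $H_i=F_{i+1}-1$ for $1\le i\le k+3$ (with the paper's indexing $F_1=1$, $F_2=2$), not $H_i=F_{i+1}$. Consequently your first ``true'' recurrence value is wrong: $H_{k+4}=H_{k+3}+H_{k+2}+NH_1=F_{k+5}+N-2$, not $F_{k+4}+N$, and the trial constraint you derive at $n=k+3$ is based on these incorrect values. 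More importantly, the part you defer as ``the technical heart'' is exactly where the theorem lives: the binding instance of Brown's criterion is not at $n=k+3$ but two steps later, at $H_{k+6}\le 1+\sum_{i=1}^{k+5}H_i$. Computing $H_{k+4}=F_{k+5}+N-2$, $H_{k+5}=F_{k+6}+3N-3$, $H_{k+6}=F_{k+7}+8N-5$ and comparing with the partial sum (using $\sum_i F_i$ telescoping) is what produces $4N\le F_{k+6}-k-5$, hence the floor bound; your proposal never identifies this index nor extracts the closed form, so neither direction of the equivalence is actually established.

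For the sufficiency direction your plan also leans on an unproven claim: that the slack $S_n=1+\sum_{i=1}^nH_i-H_{n+1}$ is eventually nondecreasing. That amounts to $H_{n+1}\le 2H_n$ for all large $n$, which is not obviously easier than what you are trying to prove. The paper instead verifies Brown's criterion for the maximal $N$ by an inductive inequality of the form $(N-2)H_{n-k-2}\le H_{n-1}+\cdots+H_{n-k-1}$, which lets it absorb the term $NH_{n-k-2}$ into the sum $1+\sum_{i\le n}H_i$, and then handles all smaller $N$ at once by citing Theorem~\ref{decreaseLastCoe} (decreasing the last coefficient preserves completeness) rather than re-running the induction. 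If you repair the initial segment, locate the critical index $k+6$, and replace the monotone-slack claim with such an inductive absorption step (or prove the monotonicity honestly), your outline would converge to the paper's argument.
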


We have a partial extension of these theorems to when there are $g$ initial ones followed by $k$ zeroes in the collection of coefficients. For a proof, see \cite{BHLLMT}.

\begin{thm}\label{thm:gbon}
Consider a PLRS generated by coefficients $[\underbrace{1, \dots, 1}_{g}, \underbrace{0,\ldots,0}_{k},N]$, with $g,k \geq 1$.
\ben[itemsep=2ex, leftmargin=2em]
    \item For $g \geq k +\lceil \log _2 k\rceil$, the sequence is complete if and only if $1 \leq N \leq 2^{k+1}-1$.
    \item For $k \leq g \leq k +\lceil \log _2 k\rceil$, the sequence is complete if and only if $1 \leq N \leq 2^{k+1} - \lceil k/{2^{g-k}} \rceil$.
    \een
\end{thm}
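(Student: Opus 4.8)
The plan is to apply Brown's criterion directly, exploiting the fact that the coefficient vector $[\underbrace{1,\dots,1}_g,\underbrace{0,\dots,0}_k,N]$ means the recurrence is $H_{n+1} = H_n + H_{n-1} + \cdots + H_{n+1-g} + N\,H_{n+1-g-k-k'}$ where $k' = 0$, i.e. $H_{n+1} = \sum_{i=1}^{g} H_{n+1-i} + N H_{n-g-k}$. The key observation is that a telescoping identity holds: since consecutive terms each add the previous $g$ terms plus an $N$-multiple of one far-back term, the partial sums $S_n = \sum_{i=1}^n H_i$ satisfy a clean relation. First I would establish that, for $n$ in the ``stable'' range (past the initial conditions), $H_{n+1} - H_n = H_n - (H_{n-g} \text{ term corrections}) + N(H_{n-g-k} - H_{n-g-k-1})$, and more usefully that $1 + S_n - H_{n+1} = 1 + S_{n-1} - (\text{something})$, so that Brown's inequality $H_{n+1} \le 1 + S_n$ either holds for all $n$ or fails at a first, identifiable $n$. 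Concretely, I expect $1 + S_n - H_{n+1} = 1 + S_{n-g-k-1} - N\,H_{n-g-k}$ (summing the recurrence and telescoping the length-$g$ window), so completeness reduces to checking $N\,H_{n-g-k} \le 1 + S_{n-g-k-1}$ for all relevant $n$, i.e. $N \le (1 + S_m)/H_{m+1}$ where here $m+1$ indexes... — one must track the indices carefully, but the upshot is that the binding constraint comes from a single value of $m$ (the first place the sequence has ``grown enough''), and the threshold for $N$ is exactly $\lfloor (1+S_m)/H_{m+1}\rfloor$ for that $m$.

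Next I would compute the relevant initial segment of $\{H_n\}$ explicitly. For the first several indices the recurrence's $N$-term hasn't ``kicked in'' yet (it references $H_{n-g-k}$, which is only a genuine earlier term once $n > g+k$), so up through index roughly $g+k+1$ the sequence agrees with the PLRS $[\underbrace{1,\dots,1}_g]$ (a ``$g$-bonacci''-type sequence) shifted by the initial-condition convention, whose terms and partial sums are known in closed form: for $g$ large the terms double, $H_{n+1} = 2H_n - H_{n-g}$-ish, giving $H_n \approx 2^{n-1}$ until the window fills. This is where the two regimes of the theorem split: if $g \ge k + \lceil \log_2 k\rceil$, then at the first index where the $N$-term matters we have $H_{m+1} = 2^{k+1}$-scale and $1 + S_m = 2^{k+2}-1$-scale, yielding the clean bound $N \le 2^{k+1}-1$; if $g$ is smaller, the $g$-bonacci doubling has not fully set in by index $g+k$, there is a deficit, and the bound degrades to $2^{k+1} - \lceil k/2^{g-k}\rceil$. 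I would prove the necessity direction (if $N$ exceeds the bound, Brown fails at that first critical index) by the explicit computation, and the sufficiency direction (if $N$ is within the bound, Brown holds for \emph{all} $n$) by induction on $n$, using the telescoping relation from the first paragraph to propagate the inequality forward once it holds at the critical index.

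The main obstacle, I expect, is the bookkeeping in the regime $k \le g \le k + \lceil\log_2 k\rceil$: here one must quantify precisely how far the $g$-bonacci sequence falls short of pure powers of $2$ after $g+k$ steps, and show that this shortfall is exactly $\lceil k/2^{g-k}\rceil$ when measured against $H_{m+1}$ at the critical index. This likely requires a secondary lemma giving $2^{k+1} - H_{\text{crit}+1}/(\text{normalization})$ or equivalently an exact formula for $S_m$ in terms of $S_{m-g}$ over the transitional range, together with a careful argument that no \emph{later} index $n$ produces a tighter constraint than the critical one — that is, that once the window is full the ratio $(1+S_n)/H_{n+1}$ is nondecreasing (or at least bounded below by its critical value), which should follow from the same telescoping identity but needs to be stated and checked. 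I would also need to verify the edge behavior at $k=1$ or $g=k$ where floors/ceilings and $\log_2$ can be delicate. Since the paper defers the full proof to \cite{BHLLMT}, I would present the reduction via Brown's criterion and the telescoping identity in detail, state the two computational lemmas for the initial segment, and refer to \cite{BHLLMT} for the finer case analysis in the intermediate regime.
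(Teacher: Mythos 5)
Note first that this paper does not actually prove Theorem~\ref{thm:gbon}; it is stated with the proof deferred to \cite{BHLLMT}, so your proposal can only be judged on its own merits, and as it stands it has concrete gaps rather than being a complete argument. The central telescoping identity you rely on is wrong: with $H_{n+1}=\sum_{i=1}^{g}H_{n+1-i}+NH_{n-g-k}$, subtracting the recurrence from $1+\sum_{i=1}^{n}H_i$ removes only the $g$-term window, giving $B_{H,n+1}=1+\sum_{i=1}^{n-g}H_i-NH_{n-g-k}$, not $1+S_{n-g-k-1}-NH_{n-g-k}$ as you wrote; the $k+1$ dropped terms are exactly what make the subsequent threshold analysis come out right, so this is not a harmless index slip.

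More importantly, even with the corrected identity, your reduction to ``a single critical index, the first place the $N$-term matters'' fails. At that first index ($n=g+k+1$) the constraint is $N\cdot H_1\leq 1+\sum_{i=1}^{k+1}H_i$, i.e.\ $N\leq 2^{k+1}$ when $g\geq k+1$, which is strictly weaker than the claimed bound $2^{k+1}-1$; in fact every constraint whose terms lie in the pure doubling range gives $N\leq 2^{k+1}$. The binding constraints occur at later indices, where the partial sums $S_{m+k}$ in $NH_m\leq 1+S_{m+k}$ already contain terms that fall behind powers of two and eventually depend on $N$ itself, so both directions require a genuine induction with an $N$-dependent hypothesis rather than an initial-segment computation; this is also the only place the quantity $\lceil k/2^{g-k}\rceil$ in the intermediate regime could emerge, and you never derive it, explicitly deferring ``the finer case analysis'' to \cite{BHLLMT} --- which is precisely the part of the theorem that needs proving. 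So the proposal is a reasonable plan (Brown's criterion plus a telescoped Brown's gap is indeed the natural framework, and is consistent with the tools this paper develops), but the identity must be fixed, the claim that later indices give no tighter constraint must be proved rather than asserted, and the intermediate-regime bound must be established, before this constitutes a proof.
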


This paper is an introduction to the classification of PLRS's by completeness and serves as an introduction to the full results, including an analysis of the principal root of the recurrence relation's characteristic function, in \cite{BHLLMT}.

\section{Modifying Sequences}\label{sec:modifying}

A basic question to ask is how far we can tweak the coefficients used to generate a sequence, yet preserve its completeness. The modifying process turns out to be well-behaved and heavily dependent on the location of coefficients that are changed. Before we start looking into implementing any changes to our sequences, we first need to understand the maximal complete sequence.

\subsection{The Maximal Complete Sequence}
The maximal complete sequence is the sequence that has terms that grow as quickly as possible while the sequence remains complete. For example, if a sequence begins $\{1, t,\ldots \}$, what can $t$ possibly be for the sequence to be complete? The sequence is increasing as a result of the specific initial conditions we are using, until the full recurrence relation takes over. So except in the degenerate case of $H_{n+1} = H_{n}$, i.e., the coefficient collection is just $[c_1=1]$, the sequence is strictly increasing. On the other hand, if $t \geq 3$, then there is no way to create a decomposition for $2$ that uses sequence terms only once. This means that the maximal complete sequence has $t=2$. Extending this idea, we establish the following lemma. 

\begin{lem}\label{clm:largestCompleteGaps}
The complete increasing sequence with maximal terms is $\{a_n\} = \{2^{n - 1}\}$.
\end{lem}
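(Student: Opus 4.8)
The plan is to show that $\{2^{n-1}\}$ is complete and that it dominates every other complete increasing sequence term-by-term, which together justify calling it the \emph{maximal} complete sequence. For completeness of $\{2^{n-1}\}$ itself, I would apply Brown's criterion directly: the first term is $2^0 = 1$, and for each $n \geq 1$ we have the telescoping identity $\sum_{i=1}^{n} 2^{i-1} = 2^n - 1$, so $a_{n+1} = 2^n \leq 1 + (2^n - 1) = 1 + \sum_{i=1}^n a_i$, with equality throughout. Hence $\{2^{n-1}\}$ satisfies \eqref{eqn:BrownsCrit} and is complete.

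The heart of the lemma is the maximality: I would prove by induction on $n$ that if $\{b_n\}$ is any complete nondecreasing sequence, then $b_n \leq 2^{n-1}$ for all $n$. The base case is Brown's criterion's requirement $b_1 = 1 = 2^0$. For the inductive step, assume $b_i \leq 2^{i-1}$ for all $i \leq n$. Then by Brown's criterion applied to $\{b_n\}$,
\[
b_{n+1} \ \leq \ 1 + \sum_{i=1}^{n} b_i \ \leq \ 1 + \sum_{i=1}^{n} 2^{i-1} \ = \ 1 + (2^n - 1) \ = \ 2^n,
\]
which closes the induction. Thus no complete increasing sequence can have a term exceeding the corresponding power of two, and $\{2^{n-1}\}$ achieves this bound at every index, so it is the unique termwise-maximal complete increasing sequence.

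I should be slightly careful about what ``maximal'' means here: the claim is not merely that some term is as large as possible, but that the entire sequence $\{2^{n-1}\}$ simultaneously maximizes every term among all complete increasing sequences, and that this extremal sequence is itself complete (so the supremum is attained). Both halves are handled above. One minor subtlety worth a remark is the interplay between ``increasing'' and the equality case in Brown's criterion: the sequence $\{2^{n-1}\}$ is strictly increasing, consistent with the hypothesis, and the induction never needs strictness — it only uses the nondecreasing hypothesis through Brown's criterion — so the argument is clean.

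The main obstacle, such as it is, is conceptual rather than technical: recognizing that Brown's criterion gives exactly the recursive bound $b_{n+1} \leq 1 + \sum_{i=1}^n b_i$ that a straightforward induction converts into $b_{n+1} \leq 2^n$, and that the powers of two are precisely the sequence for which this bound is always tight. Once that is seen, the proof is a two-line induction plus a one-line geometric-series computation; there are no estimates to push through and no case analysis.
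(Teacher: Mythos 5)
Your proof is correct, and the first half (completeness of $\{2^{n-1}\}$ via Brown's criterion with equality, using $\sum_{i=1}^{n}2^{i-1}=2^{n}-1$) is exactly what the paper does. For the maximality half you take a genuinely different route: you invoke the necessity direction of Brown's criterion and run an induction showing every complete nondecreasing sequence satisfies $b_n \le 2^{n-1}$ termwise. The paper instead argues by contraposition with a direct counting argument: if $b_n > 2^{n-1}$ for some $n$, then the at most $2^{n-1}-1$ nonempty subsets of $\{b_1,\ldots,b_{n-1}\}$ can represent at most $2^{n-1}-1$ integers, while $\{1,\ldots,b_n-1\}$ contains at least $2^{n-1}$ integers (and no term $b_m$ with $m\ge n$ can help, since the sequence is increasing), so some integer has no representation. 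Your version is shorter given that Brown's criterion is already on the table, and it cleanly isolates the termwise bound $b_n \le 2^{n-1}$; the paper's pigeonhole argument is self-contained, not relying on the ``only if'' half of Brown's criterion, and makes explicit \emph{why} a too-large term forces a gap (too few subset sums below it). Both establish the same statement, and your reading of ``maximal'' as termwise domination with the bound attained matches the paper's intent.
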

\begin{proof}
It is straightforward to see $\{a_n\} = \{ 2^{n-1} \}$ is generated by the PLRS $H_{n+1} = 2H_n$. This is complete by Brown's criterion, since for any $n$, \[
2^n = 1+\sum_{i=1}^{n}2^{i-1}.
\]
Observe that by using a strict equality here with Brown's criterion, we are ``maximizing'' the complete sequence.

Now, let $\{ b_n\}$ be an increasing sequence of positive integers, and suppose for some $n$, $b_n > 2^{n-1}$, i.e., at some index $n$, the sequence $\{b_n\}$ exceed that of the sequence $\{2^{n-1}\}$. Note that there are precisely $2^{n-1}-1$ non-empty subsets of $\{ b_1,\ldots, b_{n-1}\}$, and thus at most $2^{n-1}-1$ positive integers which can be expressed as a sum of these values. Thus, as the set $\{1, 2, \ldots, b_n -1 \}$ has at least $2^{n-1}$ elements, at least one of those elements cannot be written as a sum of integers in $\{ b_1,\ldots, b_{n-1}\}$, and so the sequence is not complete. Hence, we conclude that $\{2^{n-1}\}$ is the maximal complete sequence. 
\end{proof}
Now we can look at all complete sequences with only positive coefficients. 
\begin{proof}[Proof of Theorem \ref{basic}.]
Assuming completeness of the sequence, by the definition of a PLRS and by Brown's criterion, we have
\begin{equation}
c_1 H_{L - 1} + c_2 H_{L - 2} + \cdots + c_{L - 1} H_{1} + 1 = H_L \leq 1 + H_1 + H_2 + \cdots + H_{L - 1}.
\end{equation}
Since $c_i \geq 1$ for $1 \leq i \leq L$, $c_i = 1$ for $1 \leq i < L$.
By the definition of a PLRS,
\begin{equation}
H_{L+1} = c_1 H_L + c_2 H_{L - 1} + \cdots + c_L H_1 = H_L + H_{L - 1} + \cdots + H_2 + c_L H_1.
\end{equation}
Which together with Brown's criterion gives $c_L H_1 \leq 1 + H_1 = 2.$
And so $c_L\leq 2$, which completes the forward direction of the proof.

Conversely, we know that  the sequence $[2]$ is complete by Lemma~\ref{clm:largestCompleteGaps}.  Thus, let us assume that $c_1 = \cdots = c_{L - 1} = 1$ and $1 \leq c_L \leq 2$. We prove that $H_n$ satisfies Brown's criterion. We can show this explicitly for $1 \leq n < L$ and by strong induction on $n$ further on,
where the inductive hypothesis is applied to $H_{n + 1 - L}$ to obtain \begin{equation}
    H_{n + 2}\leq H_{n + 1} + \cdots + H_{n + 2 - L} + H_{n + 1 - L} + (H_{n - L} + \cdots + H_1 + 1),
\end{equation}
which completes the proof.
\end{proof}

 A specific case of Theorem \ref{basic} is that a PLRS with coefficients $[\underbrace{1,\ldots,1}_{L-1},2]$ is complete. A consequence of Lemma~\ref{clm:largestCompleteGaps} it that $\{H_k\} = \{2^{k - 1}\}$ is an inclusive upper bound for any complete sequence. A careful reader might note that these two results are related. Due to a PLRS's specific initial conditions, we can prove that this sequence $\{2^{k - 1}\}$ can be generated by multiple collections of coefficients. The proof, by strong induction, can be found in \cite{BHLLMT}.

\begin{cor}
A PLRS with coefficients $[\underbrace{1,\ldots,1}_{L-1},2]$ generates the sequence $H_n= 2^{n-1}$.
\end{cor}


\subsection{Modifications of Sequences of Arbitrary Coefficients}
Modifying coefficients in order to preserve completeness proves to be a balancing act. Sometimes increasing a coefficient causes an incomplete sequence to become complete, while other times, increasing a coefficient causes a complete sequence to become incomplete. For example, $[1,0,0,0,0,0,15]$ is incomplete; increasing the second coefficient to $1$, i.e., $[1,1,0,0,0,0,15]$ is complete. Further increasing it to $2$, i.e., $[1,2,0,0,0,0,15]$ is again incomplete. To study how such modifications preserve completeness or incompleteness, we add a new definition to our toolbox. 

\begin{defi}
    For a sequence $\{H_n\}$, we define its \textbf{$\boldsymbol{n}$th Brown's gap}
    \begin{equation}
    B_{H, n} := 1 + \sum_{i=1}^{n-1}H_i - H_n.
    \end{equation}
\end{defi}
Thus, from Brown's criterion, $\{H_n\}$ is complete if and only if $B_{H, n} \ge 0$ for all $n \in \mathbb{N}$.\\



So, what happens if we append one more coefficient to $[c_1,\ldots,c_L]$? It turns out that if our sequence is already incomplete, appending any new coefficients will never make it complete. This is Theorem \ref{lem:incompAddCoeff}, which using are ready to prove using Brown's gap. 

\begin{proof}[Proof of Theorem \ref{lem:incompAddCoeff}.]
By Brown's criterion, it is clear that $\{ G_{n} \}$ is incomplete if and only if there exists $n$ such that $B_{G,n}<0$. We claim that for all $m$, $B_{H,m}\leq B_{G,m}$. If true, our lemma is proven: suppose $B_{G,n}<0$ for some $n$, we would see $B_{H,n}\leq B_{G,n}<0$, implying $\{ H_{n} \}$ is incomplete as well.

We proceed by induction. Clearly, $B_{H,k}=B_{G,k}$ for $1\leq k \le L$. Further, for $k=L$, we see \[
	B_{G,L+1}-B_{H,L+1}= 1+\sum_{i=1}^{L}G_{i} - G_{L+1} - \left(1+\sum_{i=1}^{L}H_{i} - H_{L+1} \right) =H_{L+1}-G_{L+1}=1>0
.\] 
Now, let $m \ge 2$ be arbitrary, and suppose 
\begin{equation}\label{Bequ1}
B_{H,\; L+m-1}\leq B_{G,\; L+m-1}.
\end{equation}
We wish to show that $B_{H,\; L+m}\leq B_{G,\; L+m}$.  Note that 
\begin{equation}\label{1first}
B_{H,\; L+m}-B_{H,\; L+m-1}= 2H_{L+m-1} - H_{L+m}.
\end{equation}
Similarly, 
\begin{equation}\label{2first}
B_{G,\; L+m}-B_{G,\; L+m-1}= 2G_{L+m-1} - G_{L+m}.
\end{equation}

It may be proven through induction that for all $k \ge 2$, $H_{L+k}-G_{L+k}\geq 2\left( H_{L+k-1}-G_{L+k-1} \right)$ (for more details, see Appendix B of the full paper). Applying it to equations \eqref{1first} and \eqref{2first}, we see that \\ $B_{H,\; L+m}-B_{H,\; L+m-1}\leq B_{G,\; L+m}-B_{G,\; L+m-1}$. Summing this inequality to both sides of inequality \eqref{Bequ1}, we arrive at $B_{H,L+m}\leq B_{G,L+m}$, as desired.
\end{proof}


Now, we investigate the behavior when we decrease the last coefficient for any complete sequence. In Theorem \ref{decreaseLastCoe}, we find that decreasing the last coefficient for any complete sequence preserves completeness.

\begin{proof}[Proof of Theorem \ref{decreaseLastCoe}.]
 Given that $\{G_n\}$ is complete, suppose for the sake of contradiction that there exists an incomplete $\{H_n\}$. Thus, let $m$ be the least such that \begin{equation} \label{eq:incomplete}
    H_m>1+\sum^{m-1}_{i=1}H_i.
\end{equation} Simultaneously, as $\{G_n\}$ is complete, by Brown's criterion, \begin{equation}
    G_m\leq1+\sum^{m-1}_{i=1}G_i.
\end{equation} 
First, note that for all $n\leq L$, $G_n=H_n$, hence \begin{equation}
    H_m=G_m\leq 1+\sum^{m-1}_{i=1}G_i=1+\sum^{m-1}_{i=1}H_i,
\end{equation}
which contradicts (\ref{eq:incomplete}). Now, suppose $m>L$. But then by substitution of $G$ for $H$ in the first $L$ terms we obtain 
\begin{equation}
    1+\sum^{L}_{i=1}H_i \geq G_m-\sum_{i=L+1}^{m-1}G_i.
\end{equation}
Moreover,
\begin{equation}
    H_m>1+\sum^{m-1}_{i=1}H_i=1+\sum^{L}_{i=1}H_i+\sum_{i=L+1}^{m-1}H_i\geq G_m-\sum_{i=L+1}^{m-1}G_i+\sum_{i=L+1}^{m-1}H_i,
\end{equation}
and thus
\begin{align}\label{eq:contr}
    H_m-\sum_{i=L+1}^{m-1}H_i&> G_m-\sum_{i=L+1}^{m-1}G_i.
\end{align}
We claim that the opposite of (\ref{eq:contr}) is true, arguing by induction on $m$. For $m=L+1$, we obtain $G_{L+1}\geq H_{L+1}$ as $k_L\leq c_L$. Now, assume that \begin{equation}
    G_m-\sum_{i=L+1}^{m-1}G_i \geq H_m-\sum_{i=L+1}^{m-1}H_i
\end{equation} is true for a positive integer $m$. Using the inductive hypothesis, it then follows that
\begin{align}
    G_{m+1}-\sum_{i=L+1}^{m}G_i=G_{m+1}-\sum_{i=L+1}^{m-1}G_i-G_m&\geq G_{m+1}-2G_m+H_m-\sum_{i=L+1}^{m-1}H_i. 
\end{align}
It may be proven through induction that for all $k\in\mathbb{N}$, $H_{L+k+1}-2H_{L+k}\leq G_{L+k+1}-2G_{L+k}$. Note  
\begin{equation}
    G_{m+1}-2G_m+H_m-\sum_{i=L+1}^{m-1}H_i \geq  H_{m+1}-2H_m+H_m-\sum_{i=L+1}^{m-1}H_i =  H_{m+1}-\sum_{i=L+1}^{m}H_i,
\end{equation}
which does contradict (\ref{eq:contr}) for all $m>L$. 
Therefore, for all $m\in\mathbb{N}$, we have contradicted \eqref{eq:incomplete}. Hence, $\{H_n\}$ must be complete as well.
\end{proof}

The result above is crucial in our characterization of \textit{families} of complete sequences in Section \ref{sec:famiilies}; finding one complete sequence allows us to decrease the last coefficient to find more. Next, we prove two lemmas in the proof of Theorem \ref{Adding M Theorem}.


\begin{lem}\label{IncompExtension}
	Let $\{ G_{n} \}$ be the sequence defined by $[c_1,\ldots, c_{L}]$, and let $\{ H_{n} \}$ be the sequence defined by $[c_1,\ldots, c_{L-1}+1,\; c_{L}-1]$. If $\{ G_{n} \}$ is incomplete, then $\{ H_{n} \}$ must be incomplete as well. 
\end{lem}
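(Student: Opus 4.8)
\textbf{Proof proposal for Lemma~\ref{IncompExtension}.}

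The plan is to mirror the strategy used in the proof of Theorem~\ref{lem:incompAddCoeff}: show that the Brown's gaps of $\{H_n\}$ are bounded above by those of $\{G_n\}$, so that a negative gap for $\{G_n\}$ forces one for $\{H_n\}$. Concretely, I would prove that $B_{H,m} \leq B_{G,m}$ for all $m$, and then invoke Brown's criterion exactly as in Theorem~\ref{lem:incompAddCoeff}: if $B_{G,n} < 0$ for some $n$, then $B_{H,n} \leq B_{G,n} < 0$ and $\{H_n\}$ is incomplete.

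The first step is to understand how the two sequences compare term-by-term. Since both recurrences have order $L$ and the same initial conditions (the PLRS initial conditions depend only on $L$, not on the $c_i$), we have $H_n = G_n$ for $1 \leq n \leq L$. For $n = L+1$, the recurrence gives $H_{L+1} = c_1 H_L + \cdots + (c_{L-1}+1)H_2 + (c_L - 1)H_1$ versus $G_{L+1} = c_1 G_L + \cdots + c_{L-1}G_2 + c_L G_1$; since $H_i = G_i$ for $i \leq L$, the difference is $H_{L+1} - G_{L+1} = H_2 - H_1 = H_2 - 1 \geq 0$ (indeed $H_2 = c_1 + 1 \geq 2$). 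So $\{H_n\}$ starts out weakly above $\{G_n\}$ after index $L$. The key auxiliary claim, proved by induction on $k$, should be that $H_{L+k} - G_{L+k} \geq 0$ and moreover the difference is non-decreasing (or grows in a controlled way) — this is the analogue of the ``$H_{L+k} - G_{L+k} \geq 2(H_{L+k-1} - G_{L+k-1})$'' step cited from Appendix~B in the proof of Theorem~\ref{lem:incompAddCoeff}. Subtracting the two recurrences, $(H_{L+k} - G_{L+k}) = \sum_{i=1}^{L} c_i(H_{L+k-i} - G_{L+k-i}) + (H_{L+k-1} - G_{L+k-1}) - (H_{L+k-L} - G_{L+k-L})$, so the shift of one unit of coefficient mass from position $L$ to position $L-1$ contributes a net $+(H_{L+k-1}-G_{L+k-1}) - (H_{L+k-L} - G_{L+k-L})$; since $H$ dominates $G$ and is increasing, I expect this to be manageable.

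For the induction on the Brown's gaps, I would compute $B_{H,L+m} - B_{H,L+m-1} = 2H_{L+m-1} - H_{L+m}$ and likewise for $G$, exactly as in \eqref{1first}--\eqref{2first}. The base cases $B_{H,k} = B_{G,k}$ for $k \leq L$ and $B_{G,L+1} - B_{H,L+1} = H_{L+1} - G_{L+1} \geq 0$ are immediate from the term comparisons above. The inductive step then reduces to showing $2H_{L+m-1} - H_{L+m} \leq 2G_{L+m-1} - G_{L+m}$, i.e. $(H_{L+m} - G_{L+m}) \geq 2(H_{L+m-1} - G_{L+m-1})$ — precisely the auxiliary growth estimate. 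Summing this onto the inductive hypothesis $B_{H,L+m-1} \leq B_{G,L+m-1}$ gives $B_{H,L+m} \leq B_{G,L+m}$, closing the induction.

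The main obstacle I anticipate is the auxiliary claim $H_{L+k} - G_{L+k} \geq 2(H_{L+k-1} - G_{L+k-1})$: the swap here (moving a unit from the last coefficient to the second-to-last) is more delicate than the append-a-coefficient case, because the coefficient at position $L-1$ changes, so the difference sequence $D_k := H_{L+k} - G_{L+k}$ satisfies a recurrence whose ``forcing'' term involves $D_{k-1} - D_{k-L}$ rather than something manifestly positive. Handling the small values of $k$ (where the recurrence has not yet ``filled up'' and one must use the PLRS initial conditions explicitly) and confirming the factor of $2$ rather than something smaller is where the care lies; as in Theorem~\ref{lem:incompAddCoeff}, this technical induction would naturally be deferred to the appendix of the full paper \cite{BHLLMT}, with only the statement quoted here.
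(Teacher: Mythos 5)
Your overall strategy is the same as the paper's (compare Brown's gaps via $B_{H,m}\leq B_{G,m}$, reduce the inductive step to the doubling inequality $H_{L+m}-G_{L+m}\geq 2\left(H_{L+m-1}-G_{L+m-1}\right)$, and defer that technical estimate), but your setup contains a genuine error: the PLRS initial conditions do \emph{not} depend only on $L$. By Definition~\ref{defn:goodrecurrencereldef}, $H_{n+1}=c_1H_n+\cdots+c_nH_1+1$ for $1\le n<L$, so the initial segment is built from the coefficients themselves. Since $\{H_n\}$ uses $c_{L-1}+1$ in place of $c_{L-1}$, the two sequences already diverge at index $L$: one has $H_i=G_i$ only for $1\le i\le L-1$, while $H_L=G_L+1$. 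Consequently your base case ``$B_{H,k}=B_{G,k}$ for $k\le L$'' is false (the correct statement, as in the paper, is equality for $k\le L-1$ and $B_{G,L}-B_{H,L}=H_L-G_L=1>0$), and your computation $H_{L+1}-G_{L+1}=H_2-H_1$ is also wrong; the correct value is $c_1(H_L-G_L)+(H_2-H_1)=2c_1$. The inequalities happen to point in the right direction, but as written the base cases you assert are not the true ones.

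A second, smaller slip is in the difference recurrence: subtracting the two recurrences gives, with $D_n:=H_n-G_n$, the relation $D_{n+1}=\sum_{i=1}^{L}c_iD_{n+1-i}+\left(H_{n-L+2}-H_{n-L+1}\right)$, i.e.\ the forcing term is a difference of consecutive terms of the \emph{new} sequence $H$, not the expression $D_{n}-D_{n+1-L}$ you wrote. Since $\{H_n\}$ is increasing, this forcing term is manifestly nonnegative, so the auxiliary doubling claim is less delicate than you anticipate; the paper states it for all $k\ge 0$ (so that $D_{L+1}\ge 2D_L=2$, consistent with $D_{L+1}=2c_1$) and, like you, defers the induction to Appendix~B of \cite{BHLLMT}. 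With the base cases and the difference recurrence corrected, your argument coincides with the paper's proof.
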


\begin{proof}
We claim that for all $m$, $B_{H,m}\leq B_{G,m}$. This lemma is proven using similar reasoning as for Lemma \ref{lem:incompAddCoeff}. We proceed by induction. Clearly, $B_{H,k}=B_{G,k}$ for $1\leq k \le L-1$. Further, for $k=L$, we see \[
	B_{G,L}-B_{H,L}=1+\sum_{i=1}^{L-1}G_{i} - G_{L} - \left(1+\sum_{i=1}^{L-1}H_{i} - H_{L}  \right) = H_{L}-G_{L}=1>0
.\] 
Now, let $m \ge 0$ be arbitrary, and suppose 
\begin{equation}\label{Bequ2}
B_{H,\; L+m}\leq B_{G,\; L+m}.
\end{equation}
We wish to show that $B_{H,\; L+m+1}\leq B_{G,\; L+m+1}$. Note that 
\begin{equation}\label{1second}
B_{H,\; L+m+1}-B_{H,\; L+m}=2H_{L+m}-H_{L+m+1},
\end{equation}
and similarly, 
\begin{equation}\label{2second}
B_{G,\; L+m+1}-B_{G,\; L+m}=2G_{L+m}-G_{L+m+1}.
\end{equation}
Note that for all $k \ge 0$, $H_{L+k+1}-G_{L+k+1}\geq 2\left( H_{L+k}-G_{L+k} \right)$. Applying it to \eqref{1second} and \eqref{2second}, we see $B_{H,\; L+m+1}-B_{H,\; L+m}\leq B_{G,\; L+m+1}-B_{G,\; L+m}$. Summing this inequality to both sides of inequality \eqref{Bequ2}, we conclude that $B_{H,L+m+1}\leq B_{G,L+m+1}$, as desired.
\end{proof}

How many times can Lemma~\ref{IncompExtension} be applied? The answer is all the way up to $[c_1,\ldots,c_{L-1}+c_L-1,1]$, as the last coefficient must remain positive to stay a PLRS.

\begin{lem}\label{Last Case Adding M Theorem}
	Let $\{ G_{n} \}$ be the sequence defined by $[c_1,\ldots , c_{L-1},1]$, and let $\{ H_{n} \}$ be the sequence defined by $[c_1,\ldots , c_{L-1}+1]$. If $\{ G_{n} \}$ is incomplete, then $\{ H_{n} \}$ must be incomplete as well. 
\end{lem}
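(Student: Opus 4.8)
\textbf{Proof proposal for Lemma~\ref{Last Case Adding M Theorem}.}

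The plan is to imitate the structure of the proof of Lemma~\ref{IncompExtension}: show that the Brown's gaps of $\{H_n\}$ are termwise bounded above by those of $\{G_n\}$, so that any witness of incompleteness for $\{G_n\}$ (an index $n$ with $B_{G,n}<0$, which exists by Brown's criterion) forces $B_{H,n}<0$, hence $\{H_n\}$ is incomplete. So the main claim to establish is $B_{H,m}\le B_{G,m}$ for all $m$.

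First I would check the base cases. Since $G$ and $H$ share the coefficients $c_1,\dots,c_{L-2}$ and the same initial conditions, we have $G_n=H_n$ (hence $B_{G,n}=B_{H,n}$) for $1\le n\le L-1$. The interesting index is $n=L$: here $G_L = c_1 G_{L-1}+\dots+c_{L-1}G_1 + 1\cdot G_{\text{(first term)}}$ wait—more carefully, $G$ is the PLRS $[c_1,\dots,c_{L-1},1]$, so $G_L = c_1G_{L-1}+\dots+c_{L-1}G_2 + G_1$, while $H$ is the PLRS $[c_1,\dots,c_{L-1}+1]$ of length $L-1$, so its recurrence first applies at index $L$ with $H_L = c_1 H_{L-1}+\dots+(c_{L-1}+1)H_{1} $ — but I must be cautious, because the two sequences have different recurrence lengths ($L$ versus $L-1$), so the initial-condition ranges differ. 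The cleanest route is to directly compute $H_L - G_L$ using $H_i=G_i$ for $i\le L-1$: both equal $c_1H_{L-1}+\dots+c_{L-1}H_2$ plus the remaining low-order terms, and one finds $H_L - G_L = H_1 = 1$ (the extra $+1$ coefficient on the last term of $H$ replacing the $1\cdot G_1$ contribution plus an initial-condition bookkeeping term gives a net difference of $1$; this is the one genuinely fiddly computation, and it must be done carefully with the PLRS initial conditions for both sequences). This yields $B_{G,L}-B_{H,L} = H_L - G_L = 1 > 0$, matching the pattern in the previous lemma.

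Next I would run the induction. Assuming $B_{H,L+m}\le B_{G,L+m}$, I use the telescoping identity $B_{\cdot,\,L+m+1}-B_{\cdot,\,L+m} = 2(\cdot)_{L+m} - (\cdot)_{L+m+1}$ (which holds verbatim for both sequences, since $B_{n+1}-B_n = H_n - H_{n+1} + H_n = 2H_n - H_{n+1}$), so it suffices to show $2H_{L+m}-H_{L+m+1}\le 2G_{L+m}-G_{L+m+1}$, i.e. $H_{L+m+1}-G_{L+m+1}\ge 2(H_{L+m}-G_{L+m})$. This is the auxiliary growth inequality; I expect it to follow by a separate induction on the difference sequence $D_k := H_{L+k}-G_{L+k}$, using the two recurrences to express $D_{k+1}$ in terms of earlier $D_j$'s with nonnegative coefficients and checking the base case $D_0=D_1=0$, $D_2 = H_{L+2}-G_{L+2}>0$. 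Adding the resulting per-step inequality to the inductive hypothesis gives $B_{H,L+m+1}\le B_{G,L+m+1}$, closing the induction.

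The main obstacle is the base case $n=L$ together with the auxiliary inequality $D_{k+1}\ge 2D_k$: because $\{G_n\}$ and $\{H_n\}$ have \emph{different recurrence orders}, one cannot simply quote the corresponding step from Lemma~\ref{IncompExtension}, and the transition from the initial-condition regime to the full-recurrence regime has to be handled by hand for each sequence. Once the identity $H_L - G_L = 1$ and the comparison of the two recurrences at the boundary are pinned down correctly, the rest is the same telescoping-plus-induction machinery already used above. (As the text notes, this lemma is the final step in the chain $[c_1,\dots,c_{L-1},c_L]\to\cdots\to[c_1,\dots,c_{L-1}+c_L-1,1]\to[c_1,\dots,c_{L-1}+c_L]$, so combined with Lemma~\ref{IncompExtension} it completes the proof of Theorem~\ref{Adding M Theorem}.)
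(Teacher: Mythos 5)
Your overall strategy is exactly the paper's: prove $B_{H,m}\le B_{G,m}$ for all $m$ by induction, using the telescoping identity $B_{\cdot,n+1}-B_{\cdot,n}=2(\cdot)_n-(\cdot)_{n+1}$ together with the auxiliary growth inequality $H_{L+k+1}-G_{L+k+1}\ge 2\left(H_{L+k}-G_{L+k}\right)$, which the paper also defers (to Appendix B of the full version). The problem is that the boundary computation you yourself single out as the crux is wrong. Using the PLRS initial conditions, $G_L$ is still an initial-condition term of the length-$L$ recurrence, $G_L=c_1G_{L-1}+\cdots+c_{L-1}G_1+1$, while $H_L$ is the first application of the length-$(L-1)$ recurrence, $H_L=c_1H_{L-1}+\cdots+c_{L-2}H_2+(c_{L-1}+1)H_1=c_1H_{L-1}+\cdots+c_{L-1}H_1+1$; since $G_i=H_i$ for $i\le L-1$, these are equal, so $H_L-G_L=0$, not $1$, and $B_{H,L}=B_{G,L}$. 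The first genuine discrepancy occurs at index $L+1$: comparing $H_{L+1}=c_1H_L+\cdots+c_{L-2}H_3+(c_{L-1}+1)H_2$ with $G_{L+1}=c_1G_L+\cdots+c_{L-1}G_2+G_1$ gives $H_{L+1}-G_{L+1}=H_2-G_1=c_1>0$, which is precisely the value the paper records.

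This slip propagates: your stated base cases for the difference sequence, $D_0=D_1=0$ with $D_2>0$, are both off and internally inconsistent with your earlier claim $H_L-G_L=1$; the correct values are $D_0=0$ and $D_1=c_1>0$. None of this derails the method --- with the corrected boundary values the auxiliary induction and the Brown's-gap induction close exactly as in the paper's proof of Lemma~\ref{Last Case Adding M Theorem}, mirroring Lemma~\ref{IncompExtension} --- but as written the one step you flagged as needing care is carried out incorrectly and must be redone as above before the argument is complete.
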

	
\begin{rek}
	 Despite the similarities, Lemma \ref{Last Case Adding M Theorem} is not implied by Lemma \ref{IncompExtension}; both are necessary for the proof of Theorem \ref{Adding M Theorem}. Applying Lemma \ref{IncompExtension} $(c_L-1)$ times proves that if $[c_1,\ldots , c_{L-1},c_L]$ is incomplete, then $[c_1,\ldots , c_{L-1}+c_L-1,1]$ is incomplete; we cannot apply the lemma further while maintaining a positive final coefficient. Hence the case of Lemma $\ref{Last Case Adding M Theorem}$ must be dealt with separately, in order to prove Theorem \ref{Adding M Theorem}.
\end{rek}
	\begin{proof}
	The proof is similar to that of Lemma \ref{IncompExtension}. We aim to show that $B_{H,m}\leq B_{G,m}$ for all $m$. Clearly $B_{H,k}=B_{G,k}$ for $1\leq k \le L$. Further, for $k=L+1$, we see 
	\begin{equation}
	B_{G,L+1}-B_{H,L+1}=\sum_{i=1}^{L}G_{i}-G_{L+1}-\left( 1+\sum_{i=1}^{L-1}H_{L}-H_{L+1} \right) =H_{L+1}-G_{L+1}=c_1>0.
	\end{equation}
Now, let $m\geq 0$ be arbitrary, and suppose 
\begin{equation}\label{last brown gap inequality}
B_{H,L+m}\leq B_{G,L+m}.
\end{equation}
We wish to show that $B_{H,L+m+1}\leq B_{G,L+m+1}$. Note that 
\begin{equation}\label{ultimobrowngap}
B_{H,L+m+1}-B_{H,L+m}=2H_{L+m}-H_{L+m+1},
\end{equation}
and similarly
\begin{equation}\label{ultimobrowngap 2}
B_{G,L+m+1}-B_{G,L+m}=2G_{L+m}-G_{L+m+1}.
\end{equation}
It may be proven through induction that for all $k\geq 0$, $H_{L+k+1}-G_{L+k+1}\geq 2\left( H_{L+k}-G_{L+k} \right) $  (for more details, see Appendix B of the full paper). Applying it to equations \eqref{ultimobrowngap} and \eqref{ultimobrowngap 2}, we see $B_{H,L+m+1}-B_{H,L+m}\leq B_{G,L+m+1}-B_{G,L+m}$. Summing this inequality to both sides of Inequality \eqref{last brown gap inequality}, we conclude that $B_{H,L+m+1}\leq B_{G,L+m+1}$, as desired.
	\end{proof}
	
Using these lemmas, we can now prove Theorem \ref{Adding M Theorem}.

\begin{proof}[Proof of Theorem \ref{Adding M Theorem}.]
	We apply Lemma \ref{IncompExtension} $c_L-1$ times, to conclude that if $[c_1,\ldots , c_{L-1},c_L]$ is incomplete, then $[c_1,\ldots , c_{L-1}+c_L-1,1]$ is incomplete. Finally, applying Lemma \ref{Last Case Adding M Theorem}, we achieve the desired result.
\end{proof}

\section{Families of Sequences}\label{sec:famiilies}
If we recall Theorem \ref{decreaseLastCoe}, it says that given a complete PLRS, decreasing the last coefficient preserves its completeness. This raises a natural question: Given the first $L-1$ coefficients $c_1, c_2, \dots, c_{L-1}$, what is the maximal $N$ such that $[c_1, c_2, \dots, c_{L-1}, N]$ is complete? While we are not able to answer this question in all generality, in this section, we begin exploring it.

\subsection{Using 1's and 0's as Initial Coefficients}

\begin{proof}[Proof of Theorem \ref{thm:1onekzero}.]
    Suppose that $\{H_n\}$ is complete. By the definition of a PLRS, we can generate the first $k+2$ terms of the sequence simply: $H_i = i$ for all $1 \le i \le k+2$. For all $n > k+1$, we can use the recurrence relation 
\begin{equation}\label{eqn:termsThroughK+4}
    H_{n+1} = H_n + NH_{n-k-1}.
\end{equation}
In the case that $n = k +3$, 
\begin{equation}\label{eq:recrelation}
    H_{k+4} = H_{k+3} + NH_2 = H_{k+3} + 2N.
\end{equation}
As $\{H_n\}$ is complete by supposition, by Brown's criterion, 
\begin{equation}
     H_{k+4} \le H_{k+3} + H_{k+2} + \cdots + H_1 + 1.
\end{equation}
By (\ref{eq:recrelation}), we can replace $H_{k+4}$, so
\begin{equation}
    H_{k+3} + 2N \le H_{k+3} + H_{k+2} + \cdots + H_1 + 1,
\end{equation}
and isolating $N$,
\begin{align}
    N &\le \left[H_{k+2} + H_{k+1} + \cdots + H_1 + 1\right]/{2} \nonumber\\
    &= \left[(k+2) + (k+1) + \cdots + 1 + 1 \right]/{2}\nonumber \\
    &= \frac{(k+2)(k+3)}{4} + \frac{1}{2} \nonumber\\
    \intertext{and as $N$ is an integer,}
    &= \left\lfloor\frac{(k+2)(k+3)}{4} + \frac{1}{2}\right\rfloor \nonumber \\
    &= \left\lceil \frac{(k + 2)(k + 3)}{4}. \right\rceil
\end{align}
Hence, $N \le \left\lceil (k + 2)(k + 3)/{4} \right\rceil$.

We now prove that if $N \leq \left\lceil (k + 2)(k + 3)/{4} \right\rceil$, then $\{H_n\}$ is complete. We first show that if $N_{\max} = \left\lceil (k + 2)(k + 3)/{4} \right\rceil$, then $\{H_n\}$ is complete. Taking the recurrence relation $H_{n+1} = H_n + N_{\max} H_{n-k-1}$, and applying Brown's criterion gives
 \begin{align}
     H_{n+1} &=H_n + N_{\max} H_{n-k-1}\nonumber\\
     &\leq H_n +(N_{\max}-2)H_{n-k-1} + H_{n-k-1} + H_{n-k-2} + \dots + H_1 +1. \nonumber
     \intertext{We can prove by induction that  $(N_{\max}-2)H_{n-k-1} \leq H_{n-1} + \cdots + H_{n-k}$, so}
     H_{n+1} &\leq H_n + H_{n-1} +\dots + H_{n-k} + H_{n-k-1} + H_{n-k-2} + \dots + H_1 +1.
\end{align}
Hence, by Brown's criterion, the sequence $\{H_n\}$ is complete for $N_{\max}$. Lastly, by Theorem \ref{decreaseLastCoe}, for all positive $N < \left\lceil (k + 2)(k + 3)/{4} \right\rceil$, the sequence is also complete.
\end{proof}

Once we have established a result such as Theorem \ref{thm:1onekzero}, it is often possible to allow small additional adjustments to the coefficients while maintaining completeness. In the following corollary, we show that for $L \geq 6$, if we switch one of the coefficients from $0$ to $1$ except for the final zero, then the bound on $N$ to maintain completeness is at least as large.
\begin{cor}
    For $L \geq 6$, given that $[1,0,\dots,0,N]$ is complete, with $N = \left\lceil L(L+1)/{4}\right\rceil$, then $[1,c_2,\dots,c_{L-2},0,N]$ is complete where $c_i=1$ for one $i \in \{2,\dots, L-2\}$, and the rest are $0$.
\end{cor}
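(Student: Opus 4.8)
The plan is to verify Brown's criterion directly for the sequence $\{H_n\}$ generated by $[1,c_2,\dots,c_{L-2},0,N]$, where $c_i=1$ for the single chosen index $i\in\{2,\dots,L-2\}$ and $N=\lceil L(L+1)/4\rceil$, comparing it with $\{G_n\}=[1,\underbrace{0,\dots,0}_{L-2},N]$, which is complete by Theorem~\ref{thm:1onekzero}. Unwinding the PLRS initial conditions (only positions $1$, $i$, $L$ carry nonzero coefficients) gives $H_n=n$ for $n\le i$, $H_n=H_{n-1}+H_{n-i}+1$ for $i<n\le L$, and $H_n=H_{n-1}+H_{n-i}+NH_{n-L}$ for $n>L$; in particular $\{H_n\}$ is strictly increasing, and $d_n:=H_n-G_n\ge 0$ with $d_n=0$ for $n\le i$ and $d_n=\sum_{s=1}^{n-i}H_s$ for $i<n\le L$. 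I would handle $i=2$ separately: there $[1,1,\underbrace{0,\dots,0}_{L-3},N]$ is governed by Theorem~\ref{2onekzero}, so it suffices to check $\lceil L(L+1)/4\rceil\le\lfloor(F_{L+3}-L-2)/4\rfloor$ for $L\ge 6$, which follows by induction (Fibonacci growth dominates the quadratic, base cases $L=6,7$).

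For $i\ge 3$ I would show $B_{H,n}\ge 0$ in three ranges. For $n\le L$: if $n\le i$ then $B_{H,n}=1+n(n-3)/2\ge 0$, and if $i<n\le L$ then $B_{H,n}-B_{H,n-1}=2H_{n-1}-H_n=H_{n-1}-H_{n-i}-1\ge 0$ since $\{H_n\}$ is strictly increasing and $i\ge 2$, so $B_{H,n}\ge B_{H,i}\ge 0$. For $n\ge L+3$: a routine induction gives $2H_{n-1}\ge H_n$ (the base step $n=L+3$ reads $H_{L+2}\ge H_{L+3-i}+3N$, which holds since $H_{L+2}-H_L=H_{L+2-i}+H_{L+1-i}+3N$ and $H_L\ge H_{L+3-i}$ as $i\ge 3$), so $B_{H,n}$ is non-decreasing there. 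Thus everything reduces to $n=L+1$ and $n=L+2$. Using $H_{L+1}=H_L+H_{L+1-i}+N$ and $H_{L+2}=H_{L+1}+H_{L+2-i}+2N$, the two conditions become $N\le 1+\sum_{j=1}^{L-1}H_j-H_{L+1-i}$ and $2N\le 1+\sum_{j=1}^{L}H_j-H_{L+2-i}$. The first follows from $N\le L(L+1)/4+\tfrac12$ and the bound $1+\sum_{j=1}^{L-1}H_j-H_{L+1-i}\ge 1+\sum_{j=1}^{L-2}H_j\ge 1+(L-2)(L-1)/2$ (using $H_{L+1-i}\le H_{L-1}$ and $H_j\ge j$) since $(L-1)(L-6)\ge 0$. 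The second follows from $2N\le 1+L(L+1)/2$ together with the key inequality $\sum_{j=1}^{L}(H_j-j)\ge H_{L+2-i}$.

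The main obstacle is this last inequality. Writing $M=L-i$ (so $2\le M\le L-2$) and $S_r=\sum_{s=1}^r H_s$, one has $\sum_{j=1}^L(H_j-j)=\sum_{s=1}^{M}(M+1-s)H_s=\sum_{r=1}^M S_r$, so the inequality reads $\sum_{r=1}^M S_r\ge H_{M+2}$: the extra growth injected into the partial sums by the flipped coefficient must pay for the larger jump at index $L+2$. I expect to prove it by contrasting $H_{M+2}\le 2H_M+H_{M-1}+2$ (two uses of $H_n=H_{n-1}+H_{n-i}+1$ with $H_{n-i}\le H_{n-2}$) against $\sum_{r=1}^M S_r\ge H_M+2H_{M-1}+H_{M-2}+H_{M-3}$, which reduces to $H_{M-2}\ge 3$, true for $M\ge 4$; the cases $M\in\{2,3\}$ (which, as $L\ge 6$, force $i\ge 4$ and $i\ge 3$) and the boundary regime $i\ge M$ (where $H_s=s$ throughout and the inequality becomes the polynomial $M(M+1)(M+2)/6\ge M+2$) are checked directly. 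The point where I expect the most care is this case analysis, since it relies essentially on the sub-exponential growth of $\{H_n\}$ on $\{1,\dots,L\}$ rather than on Brown's criterion alone. Once $\{H_n\}$ is shown complete for $N=\lceil L(L+1)/4\rceil$, Theorem~\ref{decreaseLastCoe} gives completeness for every smaller last coefficient, which is precisely the assertion that the bound on $N$ is at least as large.
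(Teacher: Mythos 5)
Most of your outline is sound and checks out (the reduction of Brown's criterion at $n=L+1,L+2$ to the two displayed inequalities, the identity $\sum_{j=1}^{L}(H_j-j)=\sum_{r=1}^{M}S_r$ with $M=L-i$, the $i=2$ case via Theorem~\ref{2onekzero}, and the final appeal to Theorem~\ref{decreaseLastCoe}), but the step ``a routine induction gives $2H_{n-1}\ge H_n$ for $n\ge L+3$'' hides a genuine gap. The natural induction step needs $H_{n+1-i}+NH_{n+1-L}\le 2\left(H_{n-i}+NH_{n-L}\right)$, i.e.\ the doubling property $2H_{m-1}\ge H_m$ at the shifted indices $m=n+1-i$ and $m=n+1-L$; but doubling genuinely fails at $m=L+1$ and $m=L+2$ (for $[1,0,0,1,0,11]$, so $L=6$, $i=4$, $N=11$, one has $H_7=23>18=2H_6$ and $H_8=49>46=2H_7$), which are exactly the indices your induction is supposed to skip. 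They re-enter through the shifts when $n+1\in\{L+i+1,\,L+i+2,\,2L+1,\,2L+2\}$, so at those four places the step breaks and separate estimates are required. These are not trivial: for instance at $n+1=2L+2$ the needed inequality is $H_{2L+1}\ge H_{L+4}+NH_{L+2}$, and since $H_{2L+1}$ and $NH_{L+2}$ are both of order $N^2$, the leading terms cancel and positivity survives only by an $O(L^2)$ margin (a computation comparable in delicacy to your $L+1,L+2$ checks, and sensitive to the exact value $N=\lceil L(L+1)/4\rceil$). So either supply those four boundary estimates explicitly, or restructure the tail argument. A smaller slip: in the key inequality $\sum_{r=1}^M S_r\ge H_{M+2}$, your chain reduces to $H_{M-2}\ge 3$, which (since $H_{M-2}=M-2$ there) needs $M\ge 5$, so $M=4$ must join $M\in\{2,3\}$ in the directly checked cases; this is easily fixed.

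For comparison, the paper's proof runs the Brown's-criterion induction in one stroke: starting from $H_{n+1}=H_n+H_{n-i+1}+NH_{n-L+1}$, it applies the criterion inductively to the term $H_{n-L+1}$ and then asserts (by induction) the single block inequality $H_{n-i+1}+(N-2)H_{n-L+1}\le H_{n-1}+\cdots+H_{n-L+2}$, exactly mirroring the proof of Theorem~\ref{thm:1onekzero}; this avoids any claim that the Brown gap is eventually monotone, which is the strengthening that causes your difficulty. If you want to salvage your structure, proving the paper's block inequality (or some analogue that sums $L-2$ consecutive terms rather than demanding $2H_{n-1}\ge H_n$) is the more robust path.
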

\begin{proof}
We begin with the recurrence relation for fixed a $i \in \{2,\dots, L-2\}$,
\begin{align}
    H_{n+1} &= H_n + H_{n-i+1} + NH_{n-L+1}.\\
    \intertext{Applying Brown's criterion on the term $H_{n-L+1}$ gives}
    H_{n+1} &\leq H_n + H_{n-i+1} + (N-2)H_{n-L+1} + H_{n-L+1} + H_{n-L} + \dots + H_1 +1.\\
    \intertext{We can prove by induction that $H_{n-i+1} + (N-2)H_{n-L+1} \leq H_{n-1} + \cdots + H_{n-L+2}$, so}
    &\leq H_n + H_{n-1} + \dots + H_{n-L+2} + H_{n-L+1} + H_{n-L} + \dots + H_1 +1.
\end{align}
Hence, by Brown's criterion, the sequence is complete for all $L \geq 6$.
\end{proof}

\begin{proof}[Proof of Theorem \ref{2onekzero}.]
    Suppose that $\{H_n\}$ is complete. Using the definition of a PLRS, the first $k + 3$ terms of the sequence can be generated in the same way: $H_i = F_{i + 1} - 1$ for all $1 \leq i \leq k + 3$, where $F_n$ is the Fibonacci sequence. Proceeding in a manner similar to the proof of Theorem \ref{thm:1onekzero}, we see that
	\begin{align}
	    H_{k + 4} &= H_{k + 3} + H_{k + 2} + N H_1 = F_{k+5} + N - 2, \nonumber \\
		H_{k+5} &= H_{k+4} + H_{k+3} + NH_2 = F_{k+6} + 3N - 3, \nonumber \\
		H_{k+6} &= H_{k+5} + H_{k+4} + NH_3 = F_{k+7} + 8N - 5.
	\end{align}
	By applying Brown's criterion, 
	\begin{align}
		H_{k + 6} &\leq H_{k + 5} + H_{k + 4} + \cdots + H_1 + 1 \nnend
		&= F_{k+6} + 3N - 3 + F_{k+5} + N - 2 + \sum_{i=1}^{k+3}H_i + 1 \nnend
		&= F_{k+7} + 4N - 5 + \sum_{i=1}^{k+3}(F_{i+1} - 1) + 1.
	\end{align}
	Next,
    \begin{align}
		F_{k + 7} + 8N - 5 &\leq F_{k+7} + 4N - 5 + \sum_{i=1}^{k+3}(F_{i+1} - 1) + F_1, \nnend
		\intertext{which implies}
		4N &\leq \sum_{i=1}^{k+3}(F_{i+1} - 1) + F_1 = \sum_{i = 1}^{k+4}F_i + (k+3)  = F_{k+6} + (k+5).
	\end{align}
	Thus
	\begin{align}
		N &\leq \frac{F_{k + 6} - k-5}{4},\nonumber
	\intertext{and since $N$ is an integer,}
		N &\leq \left\lfloor\frac{F_{k + 6} - k-5}{4}\right\rfloor.
	\end{align}
	Next, we show that if $N = \left\lfloor (F_{k + 6} - k-5)/{4}\right\rfloor$, then $\{H_n\}$ is complete. The initial conditions can be found easily, and for the later terms we have
	\begin{align}
		H_{n + 1} &= H_n + H_{n - 1} + NH_{n - k - 2}\nnend
		&\leq H_n +(N - 2) H_{n - k - 2} + H_{n - k - 2} + H_{n - k - 3} + \cdots + H_1 + 1.\nnend
	\intertext{We can show by induction on $n$ that $(N-2)H_{n-k-2} \le H_{n-1} + \dots + H_{n-k-1}$ for all $n \ge k+3$ and obtain}
		H_{n+1} &\leq H_n + H_{n - 1} + H_{n - 2} + \cdots + H_{n - k - 1} + H_{n - k - 2} + H_{n - k - 3} + \cdots + H_1 + 1.
	\end{align}
	Hence, by Brown's criterion, this sequence is complete. Lastly, by Theorem \ref{decreaseLastCoe}, for all positive $N < \left\lfloor (F_{k + 6} - k-5)/{4}\right\rfloor$, the sequence is also complete.
\end{proof}

\begin{figure}
    \centering
    \includegraphics[scale=0.9]{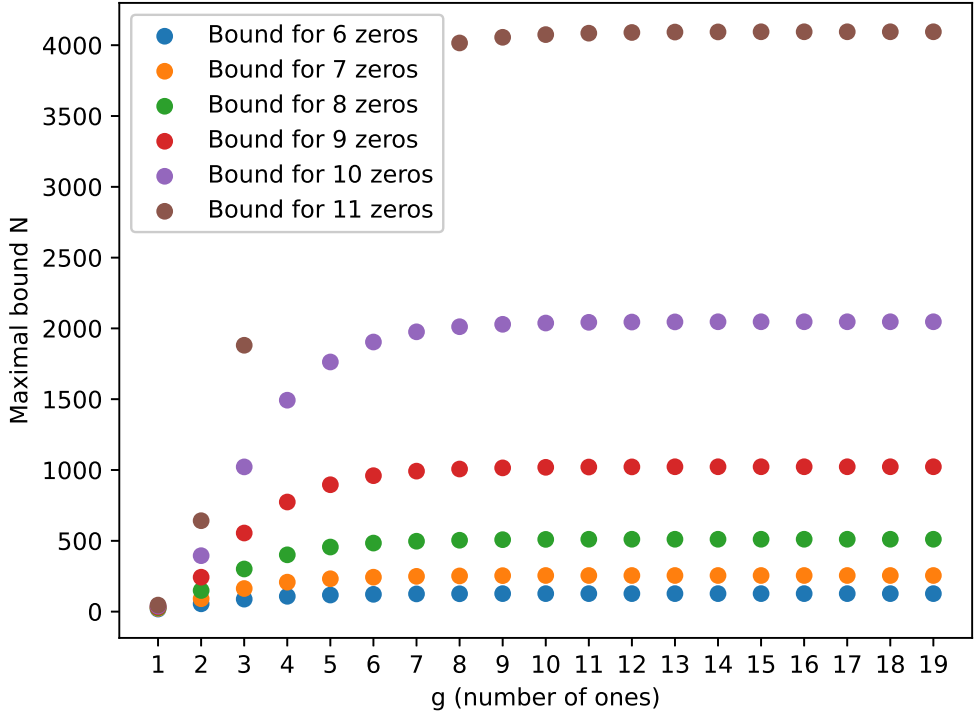}
    \caption{$[\protect\underbrace{1, \dots, 1}_{g}, \protect\underbrace{0, \dots, 0}_{k}, N]$ with $k$ and $g$ varying, where each color represents a fixed $k$.}
    \label{fig:FamiliesOfOne}
\end{figure}
We want to find a more general result for $[\underbrace{1, \dots, 1}_{g}, \underbrace{0, \dots, 0}_{k}, N]$, as seen in Figure \ref{fig:FamiliesOfOne}. Interestingly, we see that as we keep $k$ fixed and increase $g$, the bound increases, and then stays constant from some value of $g$ onward. This observation motivates the following conjecture.
\begin{conj}\label{addFrontOnes}
    If $[\underbrace{1, \dots, 1}_{g}, \underbrace{0, \dots, 0}_{k}, N]$ is complete, then so is $[\underbrace{1, \dots, 1}_{g+1}, \underbrace{0, \dots, 0}_{k}, N]$.
\end{conj}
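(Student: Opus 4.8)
The plan is to prove this by comparing the two sequences via Brown's gap $B_{H,n}$, mirroring the inductive strategy used in the proofs of Theorem~\ref{lem:incompAddCoeff} and Lemma~\ref{IncompExtension}, but now in the ``completeness-preserving'' direction. Let $\{G_n\}$ be the PLRS generated by $[\underbrace{1,\dots,1}_{g},\underbrace{0,\dots,0}_{k},N]$ and let $\{H_n\}$ be the PLRS generated by $[\underbrace{1,\dots,1}_{g+1},\underbrace{0,\dots,0}_{k},N]$. Write $L = g+k+1$ for the recurrence length of $\{G_n\}$, so $\{H_n\}$ has length $L+1$ and satisfies $H_{n+1} = H_n + \cdots + H_{n-g} + N H_{n-g-k}$. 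The goal is to show $B_{H,n} \ge B_{G,n} \ge 0$ for all $n$ (using that $\{G_n\}$ is complete), which by Brown's criterion gives completeness of $\{H_n\}$. Equivalently, since the two sequences agree on their common initial segment and then $H$ gets an extra ``$+H_{n-g}$'' worth of mass relative to $G$, one expects $H_n \ge G_n$ for all $n$ but with the Brown's-gap surplus nonetheless staying nonnegative.

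The key steps, in order: (1) Establish that $H_n = G_n$ for $1 \le n \le L$ (the initial conditions and the first few recurrence steps coincide until the $(g+1)$st ``$1$'' in $\{H_n\}$ first differs from the $0$ in that slot for $\{G_n\}$), and compute the first discrepancy index and its sign, showing $H_{L+1} - G_{L+1} = H_{L-g} = H_{k+1} > 0$ (so $\{H_n\}$ pulls ahead). (2) Prove an auxiliary monotonicity estimate of the form: for all $j$ past the first discrepancy, $H_{L+j} - G_{L+j}$ grows at least as fast as a suitable linear combination of earlier differences — specifically one wants to control $2H_{n} - H_{n+1}$ versus $2G_n - G_{n+1}$, since $B_{\cdot,n+1} - B_{\cdot,n} = 2\,(\cdot)_n - (\cdot)_{n+1}$. (3) Feed this into a single induction on $n$ showing $B_{H,n} - B_{G,n}$ is nondecreasing once it has become positive, hence stays $\ge 0$; combined with $B_{G,n}\ge 0$ this yields $B_{H,n}\ge 0$ and completeness.

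The main obstacle is step (2): unlike in Lemma~\ref{IncompExtension}, where the total ``weight'' $\sum c_i$ of the recurrence is preserved (a $1$ is moved from one slot to an adjacent one), here passing from $g$ ones to $g+1$ ones \emph{increases} $\sum c_i$ by $1$, so $\{H_n\}$ genuinely grows faster than $\{G_n\}$ and one cannot expect the clean inequality $B_{H,n}\le B_{G,n}$ that drove the incompleteness lemmas — we need the reverse, and it is not automatic. The delicate point is that the extra growth of $H_n$ must be \emph{outpaced} by the extra Brown's-gap budget $1+\sum_{i<n}H_i$ that the faster growth itself generates; quantifying this requires carefully tracking the difference $H_n - G_n$ against the partial sums $\sum_{i=1}^{n-1}(H_i - G_i)$ and showing the latter dominates. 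I expect this reduces to showing that the sequence of differences $d_n := H_n - G_n$ satisfies $d_{n+1} \le d_n + \sum_{i=1}^{n-1} d_i$ (a Brown-type inequality for the difference sequence itself), which should follow from the recurrence for $d_n$ and the fact that the new coefficient block is $[1,\dots,1,0,\dots,0,N]$ with leading coefficients all equal to $1$; but making this rigorous across the regime changes (before the first discrepancy, through the zero-block, and once the full recurrence is active) is where the real work lies, and is presumably why this is stated as a conjecture rather than a theorem in this introductory paper.
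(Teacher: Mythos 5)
This statement is left as an open conjecture in the paper (the only progress cited is Theorem~\ref{thm:gbon}, proved in \cite{BHLLMT} for $g \ge k$), so there is no paper proof to compare against; the question is whether your sketch closes the gap, and it does not. Your central strategy---showing $B_{H,n} \ge B_{G,n} \ge 0$ for all $n$---cannot work as stated. Writing $d_n := H_n - G_n$, one has $B_{H,n} - B_{G,n} = \sum_{i=1}^{n-1} d_i - d_n$, and at the \emph{first} index $n_0$ where the two sequences differ we get $B_{H,n_0} - B_{G,n_0} = -d_{n_0} < 0$: the domination you want fails automatically at the first discrepancy, with no hypothesis able to rescue it. Concretely, for $g=k=1$, $N=3$: $\{G_n\} = [1,0,3]$ gives $1,2,3,6,12,\dots$ with $B_{G,3} = 1$, while $\{H_n\} = [1,1,0,3]$ gives $1,2,4,7,14,\dots$ with $B_{H,3} = 0 < B_{G,3}$, even though both sequences are complete. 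Relatedly, your step (1) is miscomputed: the sequences diverge already at term $g+2$, inside the initial-condition regime (where $G$ uses the coefficient $0$ in slot $g+1$ but $H$ uses $1$), with $d_{g+2} = 1$; they do not agree up to $n = L$, and $d_{L+1}$ is not $H_{k+1}$.

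The fallback you describe---showing $B_{H,n}-B_{G,n}$ is ``nondecreasing once it has become positive''---does not yield $B_{H,n} \ge 0$ in the range before that happens, which is exactly where completeness of $\{H_n\}$ could fail if $B_{G,n}$ is small there; and the proposed reduction to a Brown-type inequality $d_{n+1} \le \sum_{i \le n} d_i$ is both false at $n_0$ (as above) and unsupported afterwards, because $d_n$ satisfies no common linear recurrence: once both full recurrences are active, $d_{n+1} = d_n + \cdots + d_{n-g+1} + H_{n-g} + N\left(H_{n-g-k-1} - G_{n-g-k}\right)$, and the last bracket compares terms at \emph{different} indices, so its sign is not controlled by the nonnegativity of earlier $d_i$'s. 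This is unlike Theorem~\ref{lem:incompAddCoeff} and Lemma~\ref{IncompExtension}, where the aligned recurrences give the clean two-to-one growth estimate on $H_n - G_n$; your own closing caveat correctly identifies that the analogous estimate is the real content here, and it is precisely what remains unproven. As written, the proposal is a strategy outline whose key inequality is refuted, not a proof; any successful attack will likely need a different comparison (for instance, via explicit thresholds as in Theorem~\ref{thm:gbon}, showing the maximal complete $N$ is nondecreasing in $g$), which is presumably why the statement remains Conjecture~\ref{addFrontOnes}.
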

We have made some progress towards this conjecture; in Theorem \ref{thm:gbon}, we showed the precise bound for $N$ when $g \ge k$.

\subsection{Finitary Criteria for Completeness}

Brown's criterion gives an excellent way to tell whether a sequence is complete, and clearly many useful results on complete PLRS's can be derived from it using induction. However, given the extra recursive structure inherent in PLRS's, it is natural to think that the completeness of these sequences is controlled by the initial conditions, which encode the recurrence coefficients of the sequence. This is particularly useful for deciding whether

It is easy to show that for a given length $L$, there is a bound on the largest term a PLRS $\{H_n\}$ generated by $[c_1, \ldots, c_L]$ can fail Brown's criterion: if $c_i > 2^i$ for any $1 \leq i \leq L$, then $\{H_n\}$ fails Brown's criterion at or before term $i$, and there are a finite number of sequence with coefficients satisfying $c_i \leq 2^i$. So, there is a sequence that fails latest; this shows that not only is there a bound, but that the bound is achieved.

In Lemma~\ref{lem:failAt2L-1} we are able to show that this bound is at least $2L - 1$, since $2L - 1$ is achieved by $[1, \ldots, 1, 0, 4]$. Moreover, no incomplete sequence has been found to fail for the first time after term $2L - 1$, and our conjecture is that the bound is exactly $2L - 1$:

\begin{conj}[The $2L - 1$ Conjecture]\label{2Lcrit}
    The PLRS $\{H_n\}$ defined by $[c_1, \dots, c_L]$ is complete if $B_{H, n} \ge 0$ for all $n \le 2L - 1$, i.e., Brown's criterion holds for the first $2L-1$ terms.
\end{conj}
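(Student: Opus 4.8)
The plan is to show that once Brown's criterion holds through term $2L-1$, the Brown's gaps $B_{H,n}$ are automatically nonnegative for all larger $n$. The natural strategy is to track the difference $B_{H,n+1}-B_{H,n}=1+\sum_{i=1}^{n}H_i-H_{n+1}-(1+\sum_{i=1}^{n-1}H_i-H_n)=2H_n-H_{n+1}$ and to combine this with the recurrence $H_{n+1}=c_1H_n+\cdots+c_LH_{n+1-L}$. First I would set up a telescoping identity: for $n\ge L$,
\begin{equation}
B_{H,n+1}\ =\ B_{H,n}+2H_n-H_{n+1}\ =\ B_{H,n}+(2-c_1)H_n-\sum_{j=2}^{L}c_jH_{n+1-j}.
\end{equation}
I would then try to express $B_{H,n+1}$ directly in terms of earlier Brown's gaps. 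Using $\sum_{i=1}^{m}H_i = 1 + \sum_{i=1}^{m}H_i - 1$ and the definition $B_{H,m}=1+\sum_{i=1}^{m-1}H_i-H_m$, one gets $\sum_{i=1}^{m-1}H_i = B_{H,m}+H_m-1$, which lets us rewrite partial sums of the $H_i$'s appearing after substituting the recurrence into $1+\sum_{i=1}^{n}H_i-H_{n+1}$. The goal of this bookkeeping is an identity of the shape $B_{H,n+1}=\sum_{j}\lambda_j B_{H,n+1-j}+(\text{nonnegative correction})$ valid for $n+1>2L-1$, where the $\lambda_j$ depend only on the $c_i$ and are nonnegative, and the indices $n+1-j$ all lie in a window of length roughly $L$ immediately below $n+1$. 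If such an identity holds, a clean strong induction finishes the proof: assuming $B_{H,m}\ge 0$ for all $m<n+1$ (base cases $m\le 2L-1$ given), every term on the right is nonnegative, so $B_{H,n+1}\ge 0$.

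The key structural fact I would want is that $2L-1$ is exactly the horizon needed for the correction term to become nonnegative: intuitively, $B_{H,n}$ can only decrease when the ``new'' coefficients $c_2,\dots,c_L$ multiply terms that are not yet amply covered by the running sum, and after $2L-1$ terms each $H_{n+1-j}$ (for $2\le j\le L$) has at least $L-1\ge j-1$ predecessors contributing to the sum, which is enough slack. Concretely, I expect the induction step to reduce to verifying an inequality like $\sum_{j=2}^{L}c_j H_{n+1-j}\le (c_1-2)H_n + \sum_{i=1}^{n-1}H_i + 1 + B_{H,n}$ when $n\ge 2L-2$; the left side is bounded using $H_{n+1-j}\le H_{n-1}$ and the monotone growth of $\{H_n\}$, and the right side is bounded below using the already-established gaps $B_{H,n},B_{H,n-1},\dots,B_{H,n-L+1}\ge0$. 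It should be possible to phrase this so that the quantity $2L-1$ emerges as the smallest index for which the telescoped sum $\sum_{m=L}^{n}(2H_m-H_{m+1})$ is controlled entirely by nonnegative quantities.

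The main obstacle is precisely that this conjecture is stated as open, so any such clean linear recursion for $B_{H,n}$ must in fact fail to have uniformly nonnegative coefficients in general — the coefficients $2-c_1$ and $-c_j$ can be negative, and the ``correction'' is not obviously nonnegative without already knowing completeness. The hard part will be handling the sign issues: when $c_1\ge 3$ the term $(2-c_1)H_n$ is strongly negative and must be absorbed by a large positive contribution from $\sum c_jH_{n+1-j}$ being small relative to the accumulated sum, and making that quantitative uniformly over all coefficient vectors with $c_i\le 2^i$ is exactly where the difficulty lies. A plausible fallback, matching the paper's framing, is to prove the conjecture only in restricted regimes (e.g., $c_1\le 2$, or the families $[1,\dots,1,0,\dots,0,N]$ already treated) where the telescoping argument's correction term is manifestly nonnegative, and to record the general case as genuinely open; in particular one should at minimum verify, via Lemma~\ref{lem:failAt2L-1}, that the bound $2L-1$ is tight by exhibiting $[1,\dots,1,0,4]$ failing exactly at term $2L-1$, so that no smaller horizon can suffice.
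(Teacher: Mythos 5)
There is a genuine gap here, beginning with the fact that the statement you are trying to prove is stated in the paper as a \emph{conjecture}: the paper does not prove it, explicitly says it remains open, and instead proves only the weaker Theorem~\ref{weak2Lcrit}, in which the hypothesis is strengthened to $B_{H,n} > 0$ (strict) for $L \le n \le 2L-1$. Your proposal does not close that gap, and in fact it stalls at exactly the point where the paper's own analysis stalls. The telescoping identity you want, rewriting $B_{H,n+1}$ in terms of earlier gaps after substituting the recurrence, is precisely the computation carried out in the proof of Theorem~\ref{weak2Lcrit}, and it produces
\begin{equation}
B_{H,m+1} \;=\; B_{H,m} \;+\; \sum_{j=2}^{L} c_j\bigl(B_{H,m+1-j} - 1\bigr) \;+\; L,
\end{equation}
not an identity of the form $B_{H,m+1}=\sum_j \lambda_j B_{H,m+1-j}+(\text{nonnegative correction})$. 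The crucial obstruction is the $-1$ inside each summand: with only $B_{H,m+1-j}\ge 0$ (which is all Conjecture~\ref{2Lcrit} supplies), the terms $c_j(B_{H,m+1-j}-1)$ can contribute as much as $-\sum_{j=2}^L c_j$, which is not controlled by the $+L$, so the induction does not go through. This is exactly why the paper requires strict positivity on the window $L\le n\le 2L-1$; under $B\ge 1$ the bracketed terms are nonnegative and the argument closes. Turning ``$\ge 0$ on the first $2L-1$ terms'' into ``$\ge 0$ forever'' needs a genuinely new idea (e.g., ruling out gaps that sit at exactly $0$ in the window yet later dip negative), and your own final paragraph concedes this by retreating to restricted families and recording the general case as open.

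Two smaller points. First, your worry about $c_1\ge 3$ is moot: since $H_2=c_1+1$, the hypothesis $B_{H,2}\ge 0$ already forces $c_1=1$, just as the paper's proof of Theorem~\ref{weak2Lcrit} reduces to $c_1=1$ at the outset. Second, citing Lemma~\ref{lem:failAt2L-1} to show $2L-1$ is tight is fine as motivation (that is its role in the paper), but it establishes only that no smaller horizon can work, not that the horizon $2L-1$ suffices; so it contributes nothing toward the sufficiency direction that constitutes the conjecture.
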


Proving that $[1, \ldots, 1, 0, 4]$ fails at term $2L - 1$ and not before is a matter of computing the terms, since we know exactly what the sequence is.

\begin{lem}\label{lem:failAt2L-1}
   $[1,\dots,1,0,4]$, with $k \geq 1$ ones, is always incomplete. Moreover, it first fails Brown's criterion on the $(2k+3)$rd term (equivalently, the $(2L - 1)$th term, where $L$ is the number of recurrence coefficients).
\end{lem}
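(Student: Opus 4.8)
The sequence in question is $[1,\dots,1,0,4]$ with $k$ ones, so $L = k+2$ and the recurrence is $H_{n+1} = H_n + H_{n-1} + \cdots + H_{n-k+1} + 4H_{n-k-1}$ (the coefficient of $H_{n-k}$ being $0$). The plan is to compute the sequence explicitly for small indices using the PLRS initial conditions, identify a closed-form or an exact description of the terms through index roughly $2k+3$, and then check Brown's criterion termwise. First I would write out the initial conditions: for $1 \le n \le L = k+2$ the PLRS rules give $H_n$ as the partial sums dictated by the coefficient pattern; since the first $k$ coefficients are $1$, one checks that $H_n = 2^{n-1}$ for $n$ up to about $k+1$, matching the maximal complete sequence of Lemma~\ref{clm:largestCompleteGaps}, until the $0$ and the $4$ start to matter. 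I would track the Brown gap $B_{H,n} = 1 + \sum_{i=1}^{n-1} H_i - H_n$ directly, using the handy identity $B_{H,n+1} - B_{H,n} = 2H_n - H_{n+1}$ (as used repeatedly in Section~\ref{sec:modifying}), which converts the problem into showing $B_{H,n} \ge 0$ for $n < 2k+3$ and $B_{H,2k+3} < 0$.

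The key steps, in order: (1) establish $H_n = 2^{n-1}$ for $1 \le n \le k+1$, and compute $H_{k+2} = 2^{k+1} - 1$ (here the $0$ coefficient causes the first deviation from doubling, since $H_{k+2} = H_{k+1} + \cdots + H_2 + 0\cdot H_1 + \text{(initial condition }+1)$ — I would be careful to use the correct initial-condition formula rather than the full recurrence at this boundary); (2) compute the next several terms $H_{k+3}, H_{k+4}, \dots$ using the genuine recurrence $H_{n+1} = H_n + \cdots + H_{n-k+1} + 4H_{n-k-1}$, obtaining expressions of the form $2^{n-1} - (\text{small correction})$; (3) compute $B_{H,n}$ for these indices via the difference identity and show it stays $\ge 0$ up through $n = 2k+2$; (4) show $B_{H,2k+3} = 2H_{2k+2} - H_{2k+3} + B_{H,2k+2} < 0$, i.e., that $H_{2k+3}$ overshoots $1 + \sum_{i=1}^{2k+2} H_i$. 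Since $2k+3 = 2(k+2) - 1 = 2L - 1$, this simultaneously proves incompleteness (by Brown's criterion) and pins down the exact failure index.

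The main obstacle is bookkeeping: getting clean closed forms for $H_n$ in the transitional range $n \in \{k+2, \dots, 2k+3\}$, where the terms are no longer pure powers of two but carry accumulating corrections coming from the $0$ coefficient and the factor $4$. I expect the cleanest route is to guess and verify by induction a formula like $H_{k+1+j} = 2^{k+j} - r_j$ for an explicitly described correction sequence $r_j$ (small, growing linearly or so), then feed that into the Brown-gap computation. A secondary subtlety is making sure the PLRS initial conditions (Definition~\ref{defn:goodrecurrencereldef}(2)) are applied for exactly the indices $1 \le n < L$ and the full recurrence only for $n \ge L$; an off-by-one here would shift the claimed failure index. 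Once the closed forms are in hand, verifying $B_{H,n} \ge 0$ for $n \le 2k+2$ and $B_{H,2k+3} < 0$ is a routine, if slightly tedious, finite computation with a clear inductive backbone, and no deeper idea is needed beyond the difference identity for Brown gaps.
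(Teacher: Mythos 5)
Your plan is correct and is essentially the paper's proof: both compute the terms explicitly ($H_n = 2^{n-1}$ for $n \le k+1$, $H_{k+2} = 2^{k+1}-1$) and check Brown's criterion directly, finding it holds through term $2k+2$ and first fails at term $2k+3 = 2L-1$. The only differences are cosmetic: the paper sidesteps closed forms in the transitional range by rewriting $4H_{j-k-1} = 2H_{j-k-1} + H_{j-k-1} + (H_{j-k-2} + \cdots + H_1 + 1)$ and using $2H_{j-k-1} = H_{j-k}$ there, while your guessed corrections $r_j$ actually double ($r_j = 2^{j-2}$ for $j \ge 2$) rather than grow linearly---a discrepancy your guess-and-verify induction would catch and fix.
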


\begin{proof}
We can show that $\{H_n\}$ fails Brown's criterion at term $2k + 3$ by explicitly computing the first terms of the sequence. The $(2k + 3)$rd term is
\begin{equation}
    H_{2k+3} = H_{2k+2} + \dots + H_{k+3} + 4H_{k+1};
\end{equation}
for $1 \le j \le k+1$, we have $H_j=2^{j-1}$, and additionally, $H_{k+2} = 2^{k+1}-1$, so
\begin{align}
    2H_{k+1} = 2^{k+1}>2^{k+1}-1 = H_{k+2};
\end{align}
and finally, $H_{k+1} = H_k +\dots + H_1 +1$. Putting everything together,
\begin{align}
    H_{2k+3} &= H_{2k+2} + \dots + H_{k+3} + 4H_{k+1}\nonumber\\
    &= H_{2k+2} + \dots + H_{k+3} + 3H_{k+1} + H_k + \dots + H_1 +1 \nonumber\\
    &> H_{2k+2} + \dots + H_{k+3} + H_{k+2} + H_{k+1} + H_k + \dots + H_1 +1.
\end{align}
Hence, $[1,\dots,1,0,4]$ is incomplete and in particular, Brown's criterion is failed by the $(2k + 3)$rd term.

Conversely, through a similar computation, we can show Brown's criterion holds for the first $2k + 2$ terms. For $1 \leq j \leq k + 1$, we have $H_j = 2^{j - 1}$, which are the first terms of the complete sequence $\{2^n\}$. On the other hand, when $k + 2 \leq j \leq 2k + 2$, we have $1 \leq j - k - 1 \leq k + 1$, so
\begin{align}
    H_{j + 1} &= H_j + \cdots + H_{j - k + 1} + 4H_{j - k - 1}\nonumber\\
    &= H_j + \cdots + H_{j - k + 1} + 2H_{j - k - 1} + H_{j - k - 1} + (H_{j - k - 2} + \cdots + H_1 + 1)\nonumber\\
    &= H_j + \cdots + H_{j - k + 1} + H_{j - k} + H_{j - k - 1} + H_{j - k - 2} + \cdots + H_1 + 1
\end{align}
as $2H_{j - k - 1} = 2^{j - k + 1} = H_{j - k}$. So Brown's criterion fails for the first time at term $2k + 3$.
%
%
\end{proof}

We can reframe this entire discussion as a question of when the $n$th Brown's gap $B_{H, n}$ falls below 0 for the first time. Our conjecture is then that if $\{H_n\}$ is an incomplete PLRS generated by $[c_1, \ldots, c_L]$, then $B_{H, n} < 0$ for some $n < 2L - 1$. Equivalently, we conjecture that if $B_{H, n} \geq 0$ for all $1 \leq n \leq 2L -1$, then $\{H_n\}$ is complete. This remains a conjecture, but by strengthening the requirement on $B_{H, n}$ for some terms, a similar result can be proven through another computation of terms:

\begin{thm}\label{weak2Lcrit}
    The PLRS $\{H_n\}$ generated by $[c_1, c_2, \dots, c_L]$ is complete if
    \begin{equation}
    \begin{cases}
        B_{H, n} \ge 0 & \text{for } n < L\\
        B_{H, n} > 0 & \text{for } L \le n \le 2L-1.
    \end{cases}
    \end{equation}
\end{thm}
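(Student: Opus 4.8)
The plan is to show that the strengthened hypothesis on the Brown's gaps $B_{H,n}$ for $n \le 2L-1$ propagates forward to all $n$, i.e., that $B_{H,n} \ge 0$ for every $n$; completeness then follows from Brown's criterion. The key observation is that for a PLRS generated by $[c_1,\dots,c_L]$, once $n$ is large enough that the full recurrence governs both $H_{n+1}$ and the telescoped sum, the difference $B_{H,n+1} - B_{H,n}$ can be rewritten using $H_{n+1} = c_1 H_n + \cdots + c_L H_{n+1-L}$ as an expression of the form $B_{H,n+1} - B_{H,n} = H_n - (H_{n+1} - H_n) = 2H_n - H_{n+1}$, which in turn equals a fixed integer linear combination $\sum_{i} d_i H_{n+1-i}$ of earlier terms with the coefficients $d_i$ depending only on the $c_j$. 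So the sequence of successive gap-differences satisfies its own linear recurrence (indeed, $B_{H,n}$ itself satisfies the degree-$L$ recurrence of the $H$'s for $n$ past the initial segment).

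First I would pin down exactly the index beyond which the "clean" recurrence for $B_{H,n}$ holds: since the recurrence relation $H_{n+1} = c_1 H_n + \cdots + c_L H_{n+1-L}$ is valid for $n \ge L$, and the partial sum $\sum_{i=1}^{n-1} H_i$ in $B_{H,n}$ needs all referenced indices to be positive, one checks that $B_{H,n}$ obeys the same order-$L$ linear recurrence as $\{H_n\}$ for $n$ sufficiently large — concretely, for $n \ge L+1$ or so. Writing this out, $B_{H,n} = c_1 B_{H,n-1} + \cdots + c_L B_{H,n-L}$ once $n$ exceeds that threshold, because both $H_n$ and $1 + \sum_{i<n} H_i$ transform the same way under the recurrence. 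Then, since all the $c_j \ge 0$, any such recurrence is \emph{monotone}: if $B_{H, n-1}, \dots, B_{H, n-L}$ are all $\ge 0$, then $B_{H,n} \ge 0$ as well, because it is a non-negative combination of non-negative quantities.

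Second, I would set up the induction. The base case is that $B_{H,n} \ge 0$ for $1 \le n \le 2L-1$, which is exactly the hypothesis (the strict inequalities for $L \le n \le 2L-1$ give some slack we will need, or may be needed to get the recurrence to "kick in" cleanly — see below). For the inductive step, for any $n \ge 2L$ one invokes the recurrence $B_{H,n} = c_1 B_{H,n-1} + \cdots + c_L B_{H,n-L}$: the $L$ consecutive gaps $B_{H,n-1}, \dots, B_{H,n-L}$ all have index in $[n-L, n-1] \subseteq [L, n-1]$, hence are all $\ge 0$ by the induction hypothesis, and therefore $B_{H,n} \ge 0$. This closes the induction and, with $B_{H,1} = 1 + 0 - H_1 = 0 \ge 0$ and $H_1 = 1$, Brown's criterion certifies completeness.

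The main obstacle — and the reason the theorem demands strict positivity of $B_{H,n}$ on the whole block $L \le n \le 2L-1$ rather than only $n \le L$ — is the bookkeeping at the boundary: verifying precisely from which index the identity $B_{H,n} = \sum_{j=1}^L c_j B_{H,n-j}$ first holds, given that for small $n$ the partial sum $1 + \sum_{i=1}^{n-1} H_i$ cannot be re-expressed via the recurrence (some needed indices $n-j$ would be $\le 0$, where the initial-condition formula rather than the bare recurrence applies). The strict inequalities on the block of length $L$ ending at $2L-1$ are what absorb the "defect" between the true value of $B_{H,n}$ and the value the naive recurrence would predict in that transitional range, so that from term $2L$ onward the honest recurrence takes over with all its inputs known to be non-negative. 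I expect the proof to consist largely of writing out $B_{H,n}$ for $n$ in the windows $[1,L-1]$, $[L, 2L-1]$, and $[2L, \infty)$ and checking these matchings carefully; everything after that is the short monotone induction above.
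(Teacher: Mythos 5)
Your key structural claim --- that past an initial segment the gaps satisfy the same recurrence as the sequence itself, $B_{H,n} = c_1 B_{H,n-1} + \cdots + c_L B_{H,n-L}$ --- is false, and the rest of the argument rests on it. The constant $1$ and the partial sum $\sum_{i=1}^{n-1}H_i$ in $B_{H,n}$ do not transform under the recurrence the way $H_n$ does, so correction terms survive. Concretely, for the Fibonacci PLRS $[1,1]$ one has $B_{H,n} = F_{n-1}-1$ (with $F_1=1,F_2=2$), so $B_{H,n} = B_{H,n-1}+B_{H,n-2}+1 \neq B_{H,n-1}+B_{H,n-2}$. A second warning sign: if your identity held from some fixed threshold on, your ``nonnegative combination of nonnegative quantities'' induction would need only $B_{H,n}\ge 0$ on the initial block, which would prove the paper's open $2L-1$ Conjecture (Conjecture~\ref{2Lcrit}), not just Theorem~\ref{weak2Lcrit}; a correct proof of the weaker theorem should not do that.

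The paper's proof follows your general plan (propagate positivity of the gaps forward by expressing $B_{H,m+1}$ through earlier gaps), but the identity it derives, after first reducing to $c_1=1$ (forced by $B_{H,2}\ge 0$) and using the PLRS initial conditions, is
\begin{equation*}
B_{H,m+1} \;=\; B_{H,m} \;+\; \sum_{j=2}^{L} c_j\bigl(B_{H,m+1-j}-1\bigr) \;+\; L,
\end{equation*}
valid for $m\ge 2L-1$. The terms $B_{H,m+1-j}-1$ explain why strict positivity is required on the whole window $L\le n\le 2L-1$ and why it must be maintained at \emph{every} inductive step (the indices $m+1-j$ reach back as far as $m+1-L\ge L$), not merely absorbed as a one-time boundary ``defect'' as you suggest. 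To repair your write-up you would need to replace the asserted recurrence for $B_{H,n}$ with this corrected identity (or an equivalent), verify it from the recurrence and the initial conditions, and run the induction preserving $B_{H,n}>0$ rather than $B_{H,n}\ge 0$; you would also need to dispose of the cases $L=1$ and $c_1\ge 2$ separately, as the paper does.
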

\begin{proof}
    For $L = 1$, an incomplete sequence $[c]$ fails at the second term if and only if $c > 2$. So, we may assume $L \geq 2$. If $c_1 \geq 2$, then the sequence is incomplete as $H_2 = 3$ and $2$ has no representation as a sum of term $H_i$. So we may assume $c_1 = 1$. We show by induction on $n$ that $B_{H, n} > 0$ when $n \geq L$. 
    Suppose $B_{H, n} > 0$ for $L \le n \le m$ (with $m \ge 2L-1$). Then 
\begin{align}
    B_{H, m+1} &= 1 + \sum_{i=1}^mH_i - H_{m+1}\nonumber \\
    &= 1 + \sum_{i=1}^L H_i + \sum_{i=L+1}^m \left(H_{i-1} + \sum_{j=2}^L c_jH_{i-j} \right) - \left(H_m + \sum_{j=2}^L c_jH_{m+1-j}\right)\nonumber\\
    &= \left(1 + \sum_{i=1}^{m-1}H_i - H_m + H_L\right) + \sum_{j=2}^L c_j\left(\sum_{i=L+1}^m H_{i-j} - H_{m +1-j} \right)\nonumber \\
    &= (B_{H, m} + H_L) + \sum_{j=2}^L c_j\left(B_{H, m+1-j} - 1 - \sum_{i = j+1}^L H_{i-j} \right) \nonumber \\ 
    &= B_{H, m} + \sum_{j=2}^L c_j(B_{H, m+1-j} - 1) + H_L - \sum_{i = 3}^L\sum_{j=2}^{i-1} c_jH_{i-j}\nonumber \\ 
    &= B_{H, m} + \sum_{j=2}^L c_j(B_{H, m+1-j} - 1) + H_L - \sum_{i=3}^L(H_i - H_{i-1} - 1)\nonumber \\
    &= B_{H, m} + \sum_{j=2}^L c_j(B_{H, m+1-j} - 1) + L.
\end{align}
The last line is positive since $B_{H, m+1-j} - 1 \ge 0$ and $B_{H, m}, L > 0$. This completes the induction; hence $\{H_n\}$ is complete.
\end{proof}

This result is essentially as good as Conjecture~\ref{2Lcrit} as a sufficient criterion for a sequence to be complete; however, the two results differ in strength because Conjecture~\ref{2Lcrit} gives a necessary and sufficent condition. The condition that a PLRS not fail Brown's criterion in the first $2L - 1$ terms is certainly necessary for the PLRS to be complete, as failure of Brown's criterion shows that the sequence is incomplete. The conjecture is then that this is also sufficient for the sequence to be complete, and Theorem~\ref{weak2Lcrit} proves a weaker sufficient condition.


\end{document}